\crefname{lem}{Lemma}{Lemmas}
\crefname{thm}{Theorem}{Theorems}
\crefname{prop}{Proposition}{Propositions}
\renewcommand{\baselinestretch}{1.2}
\renewcommand{\thefootnote}{\fnsymbol{footnote}}	
\newcommand\DateFootnote{
\begingroup
\renewcommand\thefootnote{}
\footnote{\today}
\setcounter{footnote}{0}
\vspace*{-3ex}
\endgroup}
\renewcommand\section{\@startsection {section}{1}{\z@}{-3ex \@plus -1ex \@minus -.2ex}{2ex \@plus.2ex}{\normalfont\large\bfseries}}
\renewcommand\subsection{\@startsection{subsection}{2}{\z@}{-2.5ex\@plus -1ex \@minus -.2ex}{1.5ex \@plus .2ex}{\normalfont\normalsize\bfseries}}
\renewcommand\subsubsection{\@startsection{subsubsection}{3}{\z@}{-2ex\@plus -1ex \@minus -.2ex}{1ex \@plus .2ex}{\normalfont\normalsize\bfseries}}
 \renewcommand\paragraph{\@startsection{paragraph}{4}{\z@}{1.5ex \@plus.5ex \@minus.2ex}{-1em}{\normalfont\normalsize\bfseries}}
\renewcommand\subparagraph{\@startsection{subparagraph}{5}{\parindent}  {1.5ex \@plus.5ex \@minus .2ex}  {-1em} {\normalfont\normalsize\bfseries}}
\renewcommand{\thefootnote}{\fnsymbol{footnote}}	
\newcommand{\arXiv}[1]{arXiv:\,\href{http://arxiv.org/abs/#1}{#1}}
\newcommand{\ceil}[1]{\lceil{#1}\rceil}
\newcommand{\floor}[1]{\lfloor{#1}\rfloor}
\renewcommand{\geq}{\geqslant}
\renewcommand{\leq}{\leqslant}
\DeclareMathOperator{\dist}{dist}
\theoremstyle{plain}
\newtheorem{theorem}{Theorem}
\newtheorem{lemma}[theorem]{Lemma}
\newtheorem{corollary}[theorem]{Corollary}
\newtheorem{proposition}[theorem]{Proposition}
\theoremstyle{definition}
\begin{document}

{\Large\bfseries\boldmath\scshape Track Layouts, Layered Path Decompositions, \\and Leveled Planarity\footnotemark[1]}

\medskip
Michael J. Bannister\footnotemark[4] \quad
William E. Devanny\footnotemark[5] \quad
Vida Dujmovi{\'c}\footnotemark[2] \\
David Eppstein\footnotemark[5] \quad
David R. Wood\footnotemark[3]

\DateFootnote

\footnotetext[1]{A preliminary version of this paper entitled ``Track Layout is Hard'' was published in 
\emph{Proc.\ of  24th International Symp. on Graph Drawing and Network Visualization} (GD '16), 
Lecture Notes in Computer Science 9801:499--510, Springer, 2016.}

\footnotetext[4]{Department of Mathematics and Computer Science, Santa Clara University, California, USA, \texttt{mbannister@fastmail.fm}. Supported in part by NSF grant CCF-1228639.}

\footnotetext[5]{Department of Computer Science, University of California, Irvine, California, USA, \texttt{\{wdevanny,eppstein\}@uci.edu}. David Eppstein was supported in part by NSF grant  CCF-1228639. William E. Devanny was supported by an NSF Graduate Research Fellowship under grant DGE-1321846. }

\footnotetext[2]{School of Computer Science and Electrical Engineering, University of Ottawa, Ottawa, Canada, \texttt{vida.dujmovic@uottawa.ca}. Supported by NSERC and the Ministry of Research and Innovation, Government of Ontario, Canada.}

\footnotetext[3]{School of Mathematical Sciences, Monash University, Melbourne, Australia, \texttt{david.wood@monash.edu}. Supported by the Australian Research Council.}

\emph{Abstract.} We investigate two types of graph layouts, track layouts and layered path decompositions, and the relations between their associated parameters track-number and layered pathwidth. We use these two types of layouts to characterize leveled planar graphs, which are the graphs with planar leveled drawings with no dummy vertices. It follows from the known NP-completeness of leveled planarity that track-number and layered pathwidth are also NP-complete, even for the smallest constant parameter values that make these parameters nontrivial. We prove that the graphs with bounded layered pathwidth include outerplanar graphs, Halin graphs, and squaregraphs, but that (despite having bounded track-number) series-parallel graphs do not have bounded layered pathwidth. Finally, we investigate the parameterized complexity of these layouts, showing that past methods used for book layouts do not work to parameterize the problem by treewidth or almost-tree number but that the problem is (non-uniformly) fixed-parameter tractable for tree-depth.

\emph{Keywords.} track layouts, layered path decompositions, track-number, layered pathwidth, leveled planar graphs, outerplanar graphs, Halin graphs, squaregraphs, unit disc graphs, parameterized complexity, treewidth, almost-tree number, tree-depth.

\newpage

\section{Introduction}

A \emph{track layout} of a graph is a partition of its vertex set into sequences, called \emph{tracks}, such that the vertices in each track form an independent set and the edges between each pair of tracks form a non-crossing set. The \emph{track-number} of a graph is the minimum number of tracks in a track layout. Track layouts are connected with the existence of low-volume three-dimensional graph drawings: a graph has a three-dimensional drawing in an $O(1)\times O(1)\times O(n)$ grid if and only if it has track-number $O(1)$~\cite{DujMorWoo-SJC-05,DujWoo-AMS-04}. In this paper we show that track layouts are also related to a more abstract structure in graphs, a \emph{layered path decomposition}. This is a path decomposition together with a partition of the vertices of the graph into a sequence of layers, where the endpoints of each edge belong to a single layer or two consecutive layers. The \emph{width} of a layered path decomposition is the size of the largest intersection between a bag of the decomposition and a layer. The \emph{layered pathwidth} of a graph is the minimum width of a layered path decomposition. 

This paper first explores relationships between track layouts, layered pathwidth, and leveled planarity. A planar (undirected) graph is \emph{leveled planar} if it has a Sugiyama-style layered graph drawing with no crossings and no dummy vertices. This is a well studied model for planar graph drawing \cite{BM01,DETT99layered,HN13,SugTagTod-SMC-81}. We show that both track layouts and layered path decompositions can be used to characterize  leveled planar graphs. Specifically, we prove that leveled planar graphs are exactly the graphs with layered pathwidth at most 1, and are exactly the bipartite graphs with track-number at most 3 (see \cref{LeveledPlanarity}). Based on the known NP-completeness of testing leveled planarity~\cite{HeaRos-SJC-92}, it follows that testing whether the track-number is at most 3 is NP-complete. This solves an open problem from 2004~\cite{DujPorWoo-DMTCS-04}. In addition, it implies that testing whether the layered pathwidth is at most 1 is also NP-complete. In general, we prove that graphs of bounded layered pathwidth have bounded track-number  (see \cref{BeyondBipartiteness}). For track-number at most 3, we conjecture that the reverse is true, contrasting the fact that there exist graphs of track-number 4 and unbounded layered pathwidth.

Our second set of results show that many well-studied graph families are leveled planar or have bounded layered pathwidth (see \cref{sec:special}). In particular, we show that bipartite outerplanar graphs and squaregraphs have layered pathwidth~$1$ and are thus leveled planar. More generally, we prove that arbitrary outerplanar graphs and Halin graphs have layered pathwidth at most $2$, and unit disc graphs with bounded clique size have bounded layered pathwidth. On the other hand, series-parallel graphs (and even tree-apex graphs, a subclass of series-parallel graphs formed by adding a single vertex to a tree) have unbounded layered pathwidth, even though they do have bounded track-number. 

Finally, we study algorithmic aspects of leveled planarity, track-number, and layered pathwidth. We show that known methods of obtaining fixed-parameter tractable algorithms for other types of planar embedding, based on Courcelle's Theorem for treewidth~\cite{BanEpp-GD-14}, or on kernelization of the 2-core for $k$-almost-trees~\cite{BanEppSim-GD-13}, do not generalize to leveled planarity, track-number, or layered pathwidth. However, for any fixed bound on the tree-depth of the input graph, we give a non-constructive proof that these problems can be solved in linear time  (see \cref{ParameterizedComplexity}).

\section{Definitions}
\label{Definitions}

\subsection{Track layouts}

A \emph{$k$-track layout} of a graph is a partition of its vertex set into $k$ sequences, called \emph{tracks}, such that the vertices in each track form an independent set and the edges between each pair of tracks form a non-crossing set. This means that there are no edges $uv$ and $u'v'$ such that $u$ is before $u'$ in one track, but $v$ is after $v'$ in another track; such a pair of edges is said to form an \emph{X-crossing}. 

The \emph{track-number} of a graph $G$ is the minimum number of tracks in a track layout of $G$; this is finite, since the layout in which each vertex forms its own track is always non-crossing. The set of edges between two tracks form a forest of caterpillars (a forest in which the non-leaf vertices of each component induce a path); in particular, the graphs with track-number 1 are the independent sets, and the graphs with track-number 2 are the forests of caterpillars \cite{HS-UM-72}.

\subsection{Tree decompositions}

A \emph{tree-decomposition} of  a graph $G$ is given by a tree $T$ whose nodes index a collection $(B_x\subseteq V(G):x\in V(T))$ of sets of vertices in $G$ called  \emph{bags}, such that:
\begin{itemize}
\item For every edge $vw$ of $G$, some bag $B_x$ contains both $v$ and $w$, and 
\item For every vertex $v$ of $G$, the set $\{x\in V(T):v\in B_x\}$ induces a non-empty (connected) subtree of $T$.
\end{itemize}
The \emph{width} of a tree-decomposition is $\max_x |B_x|-1$, and the \emph{treewidth} of a graph $G$ is the minimum width of any tree decomposition of $G$. Treewidth was introduced (with a different but equivalent definition) by Halin~\cite{Halin76} and tree decompositions were introduced by Robertson and Seymour~\cite{RS-GraphMinorsII-JAlg86}.

A \emph{layering} of a graph is a partition of the vertices into a sequence of disjoint subsets (called \emph{layers}) such that each edge joins vertices in the same layer or consecutive layers. One way, but not the only way, to obtain a layering is the \emph{breadth first layering} in which we partition the vertices by their distances from a fixed starting vertex~\cite{DujMorWoo-FOCS-13,DMW17}. We emphasis that a layering does not specify an ordering of the vertices within each layer, so there is no notion of edge crossings in a layering. 


A \emph{layered tree decomposition} of a graph is a tree decomposition together with a layering. The \emph{layered width} of layered tree decomposition is the size of the largest intersection of a bag with a layer. The \emph{layered treewidth} of a graph $G$ is the minimum layered width of a tree-decomposition of $G$. Dujmovi\'c, Morin, and Wood~\cite{DujMorWoo-FOCS-13,DMW17} introduced layered treewidth and proved that every planar graph has layered treewidth at most $3$, that every graph with Euler genus $g$ has layered treewidth at most $2g+3$, and more generally that a minor-closed class has bounded layered treewidth if and only if it excludes some apex graph. Dujmovi\'c, Eppstein, and Wood~\cite{DEW15,DEW17} showed that layered treewidth is of interest well beyond minor-closed classes. For example, they proved that a graph embedded on a surface of Euler genus $g$ with at most $k$ crossings per edge has  layered treewidth $O((g+1)k)$. Analogous results were proved for map graphs defined with respect to any surface. Applications of layered treewidth include nonrepetitive graph colouring  \cite{DMW17}, queue layouts, track layouts and 3-dimensional graph drawings \cite{Duj15,DMW17}, book embeddings \cite{DF18}, intersection graph theory \cite{Shahrokhi13}, and graph structure theory \cite{DJMNW17}.

A \emph{path decomposition} is a tree decomposition where the underlying tree is a path~\cite{RobSey-JCTSB-83}. Thus, it can be thought of as a sequence of subsets of vertices (called \emph{bags}) such that each vertex belongs to a contiguous subsequence of bags and each two adjacent vertices have at least one bag in common.  \emph{Layered path decomposition} and \emph{layered pathwidth} are defined in an analogous way to layered tree decomposition and layered treewidth. So the \emph{layered pathwidth} of a graph $G$ is the minimum integer $k$ such that for some  path decomposition and layering of $G$, the intersection of each bag with each layer has at most $k$ vertices. The present paper is the first to consider layered path decompositions. 

Note that the layered pathwidth of a graph is at most one more than its pathwidth (just put every vertex in one layer). We can do slightly better as follows:

\begin{proposition}
Every graph with pathwidth $k$ has layered pathwidth at most $\ceil{\frac{k+1}{2}}$. 
\end{proposition}

\begin{proof}
It is well known and easily proved that every graph $G$ with pathwidth $k$ has a path  decomposition, given by a sequence of bags $B_1,\dots,B_n$, such that $|B_i| \leq k+1$ for all $i\in[n]$, and $|B_i\setminus B_{i-1}|=1$ and 
$B_{i-1}\not\subseteq B_{i}$ for all $i\in[2,n]$. We now construct a layering $(V_1,V_2)$ of $G$, such that $|B_i\cap V_1| \leq \ceil{\frac{k+1}{2}}$ and $B_i\cap V_2 \leq \floor{\frac{k+1}{2}}$ for all $i\in[n]$. First, put $\ceil{\frac{k+1}{2}}$ vertices of $B_1$ into $V_1$, and put the remaining vertices of $B_1$ into $V_2$. Now for $i=2,\dots,n$ perform the following step: let $w$ be the vertex in $B_i\setminus B_{i-1}$, and put $w$ on the same layer as a vertex in $B_{i-1}\setminus B_{i}$ (which exists since $B_{i-1}\not\subseteq B_i$).  Thus 
$|B_i\cap V_1| \leq |B_{i-1}\cap V_1| \leq \ceil{\frac{k+1}{2}}$ by induction. 
Similarly, $|B_i\cap V_2|\leq |B_{i-1}\cap V_2| \leq \floor{\frac{k+1}{2}}$. Thus $G$ has layered pathwidth at most 
 $\ceil{\frac{k+1}{2}}$. 
\end{proof}

More importantly, layered pathwidth might be much less than pathwidth. For example, it is well known that the pathwidth of the $n\times n$ grid equals $n$ (see \cite{HW17}), but the layered pathwidth of the $n\times n$ grid equals 1, as illustrated in \cref{fig:Grid}. 

\begin{figure}[h]
\centering\includegraphics{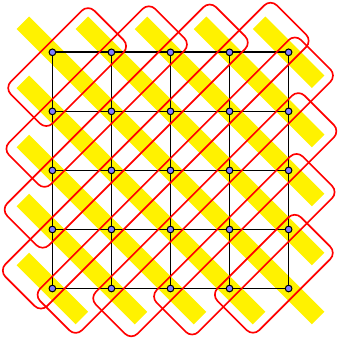}
\caption{Layered path decomposition of a grid with layered width 1, where layers are  yellow rectangles, and bags are red curves. The endpoints of each edge are in some bag, each vertex is in a consecutive set of bags, and each bag has at most one vertex in each layer. }
\label{fig:Grid}
\end{figure}

\subsection{Leveled planarity}

The class of \emph{leveled planar graphs} was introduced in 1992 by Heath and Rosenberg~\cite{HeaRos-SJC-92} in their study of queue layouts of graphs. A leveled planar drawing of a graph is a straight-line crossing-free drawing in the plane, such that the vertices are placed on a sequence of parallel lines (called \emph{levels}), where each edge joins vertices in two consecutive levels. Levels in a leveled planar drawing are numbered consecutively. These numbers are called \emph{level numbers}. A graph is \emph{leveled planar} if it has a leveled planar drawing. (Note that the ordering constraint on endpoints of pairs of edges between two tracks in a track layout is the same as the analogous constraint between two consecutive levels of a leveled planar drawing.)\

Note that leveled planar graphs correspond to  Sugiyama-style graph drawings~\cite{SugTagTod-SMC-81} that achieve perfect quality according to two of the most important quality measures for the drawing, the number of edge crossings~\cite{EadWor-Algo-94} and the number of dummy vertices~\cite{HeaNik-GD-01}. 

\cref{sec:special} shows that leveled planar graphs include several natural and well-studied classes of graphs, including the bipartite outerplanar graphs, squaregraphs, and dual graphs of arrangements of monotone curves. We characterize leveled planar graphs by both by their low track-number and their low layered pathwidth. This, together with the fact that recognizing leveled planar graphs is NP-complete~\cite{HeaRos-SJC-92}, will imply that testing whether the track-number or layered pathwidth is small is also NP-complete.

\section{Leveled planarity, track layouts, and layered pathwidth}
\label{LeveledPlanarity}

This section explores relationships between leveled planarity, track layouts, and layered pathwidth.


\subsection{Leveled planarity and track layouts}
\label{TrackLayouts}

\begin{figure}[t]
\centering\includegraphics[width=3in]{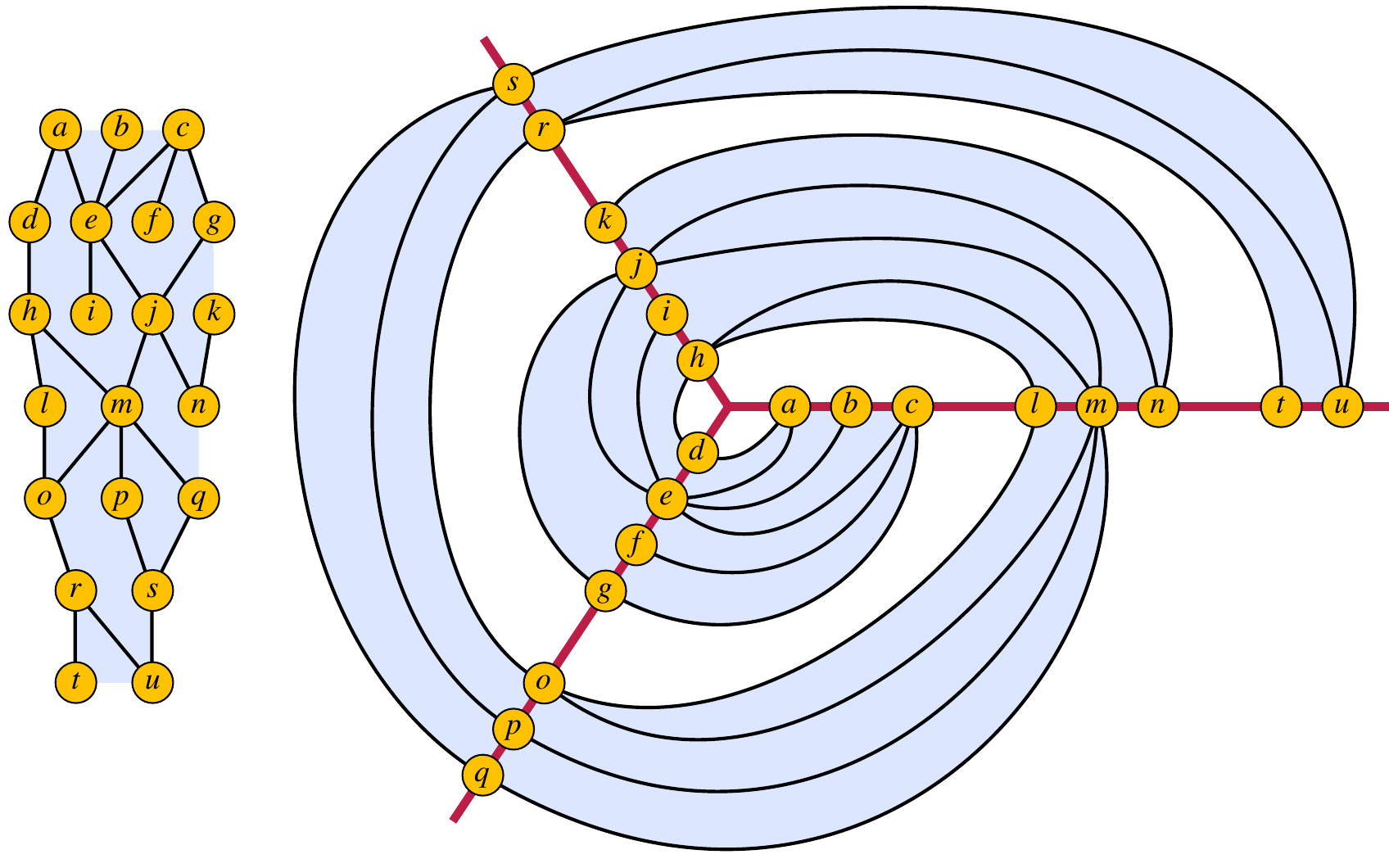}
\caption{Converting a leveled drawing to a 3-track layout}
\label{fig:Spiral}
\end{figure}

\begin{lemma}[implicit in \cite{FLW-JGAA-03}]
\label{lem:spiral}
Every leveled planar graph has a 3-track layout.
\end{lemma}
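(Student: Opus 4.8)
The plan is to read the levels of a leveled planar drawing onto three tracks in a round-robin (``spiral'') fashion. Fix a leveled planar drawing with levels $L_0, L_1, \dots, L_m$, each $L_i$ carrying the left-to-right vertex order induced by the drawing, and recall that every edge joins two consecutive levels. I would assign level $L_i$ to track $i \bmod 3$, and order each track by concatenating its levels in increasing order of index, keeping each level in its drawn left-to-right order. Thus track $t$ reads $L_t, L_{t+3}, L_{t+6}, \dots$, and the whole layout winds through the three tracks as $L_0 \to L_1 \to L_2 \to L_3 \to \cdots$, that is, a spiral (Figure~\ref{fig:Spiral}).

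Two of the three track-layout requirements are immediate. Since edges join only consecutive levels, and consecutive indices have distinct residues mod $3$, every edge connects two different tracks. Moreover two vertices on the same track lie in levels whose indices differ by a nonzero multiple of $3$, hence in non-consecutive levels, so they are non-adjacent; each track is an independent set. It remains to partition the edges into non-crossing sets joining pairs of tracks, and the natural partition is by the (unordered) pair of tracks that an edge connects.

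For the non-crossing check I would, for a fixed pair of tracks, group its edges according to the consecutive-level pair $(L_i, L_{i+1})$ from which they come. Within a single such group the two track orders restrict to the two drawn level orders, so the track-layout crossing condition is literally the statement that the edges between $L_i$ and $L_{i+1}$ do not cross in the planar drawing; hence each group is internally non-crossing. Between two different groups I would observe that as the level index $i$ increases, the block (i.e.\ level) occupied by each endpoint moves later along its track, so an edge from a later level pair has \emph{both} endpoints after those of an edge from an earlier level pair, precluding a crossing. The step deserving the most care is the ``wrap-around'' pair of tracks $t$ and $t+1 \bmod 3$ fed by the level pairs $(L_{3k+2}, L_{3k+3})$: here the two endpoints sit in blocks $k$ and $k+1$ rather than in equal-indexed blocks. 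The key point I must verify is that this index offset is the \emph{same} for every level pair feeding this track-pair, so the two endpoint positions still advance monotonically together and no crossing is created. Confirming that a single uniform (left-to-right) orientation of all levels simultaneously satisfies all three pairwise conditions — in particular that no alternate level needs to be reversed — is the crux of the argument.
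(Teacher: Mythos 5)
Your proposal is correct and follows essentially the same route as the paper: assign level $i$ to track $i\bmod 3$, order each track by concatenating its levels in index order with the drawn left-to-right order inside each level, and observe that edges within one level pair inherit planarity while edges from different level pairs mapped to the same track pair occupy disjoint, consistently ordered blocks on both tracks. The "crux" you flag at the end is already settled by your own between-group observation (both endpoints of a later-level-pair edge come strictly later on their respective tracks, with a uniform block offset), which is exactly the paper's one-line justification.
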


\begin{proof}
Assign the vertices of the graph to tracks according to their level number in the leveled drawing, modulo~3, as shown in \cref{fig:Spiral}. Within each track, order the levels by their level numbers, and then order the vertices within each level contiguously. Two edges that connect the same pair of levels cannot cross because of the chosen vertex ordering within the levels, and two edges that connect different pairs of levels but are mapped to the same pair of tracks cannot cross because of the ordering of the levels within the tracks.
\end{proof}

\cref{lem:spiral} can be interpreted as `wrapping' a leveled drawing on to 3 tracks; see \cite{DujPorWoo-DMTCS-04} for a more general wrapping lemma. As \cref{fig:Spiral} shows, a 3-track layout can also be interpreted geometrically, as a planar drawing in which the tracks are represented as three rays from the origin; it follows from this interpretation that 3-track graphs (and the weakly leveled planar graphs described in \cref{sec:weaklevel}) have universal point sets of size $O(n)$, consisting of $n$ points on each ray. However, for more than three tracks, a similar embedding of the tracks as rays in the plane would not lead to a planar drawing, because there is no requirement that edges of the graph connect only consecutive rays. Indeed, all graphs (for example, arbitrarily large complete graphs) have 4-track subdivisions~\cite{DujWoo-DMTCS-05}, and there are cubic expander graphs with 4-track layouts~\cite{DSW16}.

Define an \emph{arc} of an undirected graph $G$ to be a directed edge formed by orienting one of the edges of $G$. For a graph $G$ with a 3-track layout, define a function $\delta$ from the arcs of $G$ to $\pm 1$ as follows: if an arc $uv$ goes from track $i$ to track $i+1\pmod{3}$ (that is, if it is oriented clockwise in the planar embedding described above), let $\delta(uv)=+1$; otherwise (if it is oriented counterclockwise), let $\delta(uv)=-1$. For an oriented cycle $C$, we define (by abuse of notation) $\delta(C)=\sum_{uv\in C}\delta(uv)$.

\begin{lemma}
\label{lem:oddsum}
Let $C$ be a cycle embedded in a 3-track layout. Cyclically orient the edges of $C$. 
If $|C|$ is even then $\delta(C)=0$. If $|C|$ is odd then $|\delta(C)|=3$.
\end{lemma}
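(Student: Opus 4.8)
The plan is to determine $\delta(C)$ by combining two elementary congruences with one topological bound. Write $n=|C|$ for the length of the cycle. Since each track is an independent set, consecutive vertices of the oriented cycle lie on different tracks, and with only three tracks available they must lie on tracks that differ by $\pm 1 \pmod 3$; hence every edge contributes $\delta=\pm1$ and $\delta(C)$ is a sum of $n$ terms from $\{+1,-1\}$. This already yields a parity constraint: a sum of $n$ odd numbers has the parity of $n$, so $\delta(C)\equiv n\pmod 2$. A second constraint comes from lifting the cyclic sequence of track labels to $\mathbb{Z}$: starting from any integer representative of the first vertex's track and adding the value of $\delta$ along each successive edge produces integers that reduce mod $3$ to the true track labels. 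Because the walk returns to its starting vertex, its final lifted label is congruent mod $3$ to the initial one, and the net increment along the walk is exactly $\delta(C)$; therefore $\delta(C)\equiv 0\pmod 3$. Writing $\delta(C)=3w$, the parity constraint becomes $w\equiv n\pmod 2$.

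It remains to show $|w|\le 1$, and this is the crux of the argument, where planarity must be used: the two congruences alone only pin $\delta(C)$ down modulo $6$ and do not exclude values such as $\pm6$ or $\pm9$. I would invoke the geometric realization described just after \cref{lem:spiral}, drawing the three tracks as rays from the origin at angles $0,\tfrac{2\pi}{3},\tfrac{4\pi}{3}$, placing each vertex on its ray in track order, and drawing each edge monotonically in angle inside the wedge between its two rays. The non-crossing condition of the track layout guarantees that this is a planar drawing, so the image of $C$ is a simple closed curve $\gamma$ that avoids the origin. By construction each edge changes the angular coordinate about the origin by exactly $\pm\tfrac{2\pi}{3}$ in accordance with its value of $\delta$, so the total change of angle along $\gamma$ is $\tfrac{2\pi}{3}\,\delta(C)=2\pi w$; that is, $w$ is (up to a global sign fixed by the orientation convention) the winding number of $\gamma$ about the origin. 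Since $\gamma$ is a simple closed curve, the Jordan curve theorem forces its winding number about any point off the curve into $\{-1,0,+1\}$, whence $|w|\le 1$.

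Combining the bound with the parity statement finishes the proof. If $C$ is even then $w$ is even with $|w|\le1$, forcing $w=0$ and $\delta(C)=0$; if $C$ is odd then $w$ is odd with $|w|\le1$, forcing $w=\pm1$ and $|\delta(C)|=3$. I expect the middle paragraph to be the main obstacle: the congruence arguments are routine, but correctly identifying $\delta(C)/3$ with a genuine winding number---and in particular choosing the drawing so that each edge is angle-monotone within its wedge, so that the discrete sum equals the continuous total angle change---requires care before the Jordan curve theorem can be applied to bound it.
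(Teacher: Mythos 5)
Your proof is correct, but it takes a genuinely different route from the paper's. The paper argues by induction on $|C|$: after the base cases $|C|=3$ and $|C|=4$, it realizes $C$ as a planar polygon in the three-ray drawing, finds an ear $uvw$ whose two edges have opposite signs of $\delta$ (showing that an ear with same-sign edges forces its endpoints to be innermost on their tracks, which can happen for at most one ear unless $C$ is a triangle), and then contracts that ear to get a cycle $C'$ with $|C'|=|C|-2$ and $\delta(C')=\delta(C)$. You instead combine the two congruences $\delta(C)\equiv|C|\pmod 2$ and $\delta(C)\equiv 0\pmod 3$ with the topological fact that $\delta(C)/3$ is the winding number of the (simple, by planarity of the three-ray realization) closed curve about the origin, hence lies in $\{-1,0,1\}$ by the Jordan curve theorem. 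This is precisely the interpretation the paper states as an informal remark \emph{after} its proof (``the previous lemma is akin to showing that every even oriented cycle has winding number $0$\dots'') but does not carry out. Your version buys a uniform, induction-free argument and makes the geometric meaning of $\delta$ transparent; its cost is reliance on the Jordan curve theorem and on the careful verification (which you correctly flag, and which the paper asserts in its discussion following \cref{lem:spiral}) that the three-ray realization is a planar drawing with angle-monotone edges, so that the discrete sum $\tfrac{2\pi}{3}\delta(C)$ really equals the continuous total angular change. The paper's ear induction is more elementary, using planarity only through the existence of two ears in a simple polygon, though its key claim about same-sign ears is itself stated somewhat tersely. Both arguments are sound.
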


\begin{proof}
We proceed by induction on $|C|:=|V(C)|$. 
If $|C|=3$, then $C$ has one vertex on each track and $\delta(C)\in\{3,-3\}$. 
If $|C|=4$, then $C$ has two edges with $\delta=+1$ and two edges with $\delta=-1$, implying $\delta(C)=0$. 
Now assume that $|C|\geq 5$. Use the 3-track layout to embed $C$ in the plane as described in the proof of \cref{lem:spiral}, but with straight edges instead of the curved edges shown in \cref{fig:Spiral}.
As a planar polygon, $C$ has at least two ears, which are triangles formed by two of its edges that are empty of other vertices of $C$ (which may be found as the leaves in the tree formed as the dual graph of a triangulation of $C$). If one ear has the same sign of $\delta$ for both of the edges that form it, these edges must connect pairs of vertices that are the innermost  on their tracks.
Therefore, two such ears with same-sign edges could only exist if $C$ is a triangle.
For any longer cycle, let $uvw$ be an ear for which $\delta(uv)=-\delta(vw)$; thus edges $uv$ and $vw$ both connect the same two tracks, and (by the assumption that triangle $uvw$ is empty) $u$ and $w$ are consecutive in their track.
By deleting $v$ and merging $uw$ into a single vertex, we construct a cycle $C'$ with $|C'|=|C|-2\geq 3$, 
and a 3-track layout of $C'$ with $\delta(C')=\delta(C)$. The result follows by induction.
\end{proof}

The previous lemma can be restated in terms of winding number (see \cite{WindingNumber}). The \emph{winding number} of a closed curve $C$ in the plane around a given point $x$ is the number of times that $C$ travels counterclockwise around $x$. The contribution to the winding number of each edge $uv$ is $\frac{2\pi}{3}\,\delta(uv)$. So \cref{lem:oddsum} says that for an oriented cycle $C$ around the origin in a $3$-track representation of $C$ with three rays (as in \cref{fig:Spiral}), if $C$ is even then the winding number is 0, and if $C$ is odd then the winding number is 1. 
 
While \cref{lem:spiral} shows that a leveled planar drawing can be wrapped on to three tracks, we now use \cref{lem:oddsum} to show that a bipartite 3-track layout can be unwrapped to produce a leveled planar drawing. 

\begin{theorem}
\label{LeveledPlanarBipartite3Track}
A graph $G$ is leveled planar if and only if $G$ is bipartite and has a 3-track layout.
\end{theorem}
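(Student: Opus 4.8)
The plan is to prove the two directions separately; the forward direction is immediate from the earlier results, while the backward direction is the substantive ``unwrapping.'' For the forward direction, suppose $G$ has a leveled planar drawing. Assigning each vertex the parity of its level gives a proper $2$-coloring, since every edge joins two consecutive levels; hence $G$ is bipartite. The existence of a $3$-track layout is then exactly \cref{lem:spiral}, so this direction is a one-liner.

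For the backward direction, suppose $G$ is bipartite and has a $3$-track layout; I may assume $G$ is connected, since a leveled planar drawing of the whole graph can be assembled from drawings of its components placed in disjoint horizontal ranges, and both hypotheses restrict to components. The key idea is to use $\delta$ as a potential. Because $G$ is bipartite it has no odd cycle, so \cref{lem:oddsum} gives $\delta(C)=0$ for every cycle $C$. This is precisely the condition needed for $\delta$ to be the ``gradient'' of an integer level function: fixing a base vertex $r$ with $\ell(r)=0$ and setting $\ell(v)$ to be the sum of $\delta$ along any path from $r$ to $v$, the vanishing of $\delta$ around cycles makes $\ell$ well defined. By construction $\ell(v)-\ell(u)=\delta(uv)=\pm1$ for every edge $uv$, so every edge joins consecutive levels. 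Reducing this relation modulo $3$ shows that the track containing a vertex is determined by its level modulo $3$, so the vertices lying on any single level all belong to a common track.

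It remains to realize this assignment as a planar drawing, which I expect to be the main obstacle. Place each vertex on the horizontal line indexed by its level, and within each level order the vertices by the order of the track on which they lie. In a leveled drawing two edges can cross only if they connect the same pair of consecutive levels, since edges joining different level-pairs occupy disjoint horizontal strips. The edges joining levels $\ell$ and $\ell+1$ run between track $\ell \bmod 3$ and track $(\ell+1)\bmod 3$, a single pair of consecutive tracks, and therefore form a subset of the edges between those two tracks, which are non-crossing with respect to the track orders by the definition of a track layout. Hence no two edges cross and the drawing is leveled planar. The care needed is in checking that the chosen left-to-right order on each level is consistent across the two strips meeting at that level (it is, being the track order of that level's track in both), and that the ``consecutive tracks'' bookkeeping attached to $\delta=\pm1$ lines up with consecutive levels; once these are pinned down, planarity reduces cleanly to the non-crossing property of the given track layout.
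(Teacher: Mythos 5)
Your proposal is correct and follows essentially the same route as the paper: the forward direction via the level-parity bipartition and \cref{lem:spiral}, and the backward direction by using $\delta$ as a potential (well defined because \cref{lem:oddsum} forces $\delta(C)=0$ on the even cycles of a bipartite graph) and then ordering each level by its track order so that any crossing would contradict the track layout. The only cosmetic difference is that you argue path-independence of the level function directly, whereas the paper fixes a spanning tree and checks the fundamental cycles; these are equivalent.
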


\begin{proof}
In one direction, if $G$ is leveled planar, then it is bipartite (with a coloring determined by the parity of the level numbers of the drawing) and has a 3-track layout by \cref{lem:spiral}.

In the other direction, suppose that $G$ is bipartite and has a 3-track layout.
We may assume without loss of generality that $G$ is connected, for otherwise we can draw each connected component of $G$ separately. Let $T$ be a spanning tree of~$G$. Root $T$ at an arbitrary vertex $r$ of $G$. 
For each vertex $v$ of $G$, let $T_v$ be the path from $r$ to $v$ in $T$, and let 
$$\ell(v):=\sum_{xy\in E(T_v)}\!\! \delta(xy).$$
Assign $w$ to level $\ell(v)$ in a leveled drawing of $G$. Note that levels might be negative. 
By construction, the endpoints of each edge of $T$ are assigned to consecutive levels. 

We now show that the same is true for each non-tree edge.  
Let $pq$ be an edge in $G-E(T)$. 
Let $v$ be the least common ancestor of $p$ and $q$ in $T$. 
Let $P$ be the path from $v$ to $p$ in $T$
Let $Q$ be the path from $v$ to $q$ in $T$. 
Let $Q'$ be the path from $q$ to $v$ in $T$. 
Let $C$ be the oriented cycle $(pqQ'P)$. 
Then 
\begin{align*}
\ell(p)-\ell(q) 
& = \sum_{xy\in E(T_p)} \!\!\delta(xy) \;-\; \sum_{xy\in E(T_q)} \!\!\delta(xy)\\
& = \sum_{xy\in E(P)} \!\!\delta(xy) \;-\; \sum_{xy\in E(Q)} \!\!\delta(xy).
\end{align*}
Now
\begin{align*}
\delta(C) 
& = \delta(pq) \;+\; \sum_{xy\in E(Q')} \!\!\delta(xy) \;+\; \sum_{xy\in E(P)} \!\!\delta(xy) \\
& = \delta(pq) \;-\; \sum_{xy\in E(Q)} \!\!\delta(xy) \;+\; \sum_{xy\in E(P)} \!\!\delta(xy) .
\end{align*}
Thus $\ell(p)-\ell(q) = \delta(C) - \delta(pq)$. 
Since $G$ is bipartite, $|C|$ is even. Thus $\delta(C)=0$ by \cref{lem:oddsum}.
Hence $\ell(p)-\ell(q) = - \delta(pq)$, which is $\pm 1$. 
Therefore the endpoints of each edge of $G$ are assigned to consecutive levels. 

Within each level of the drawing, the vertices all come from the same track, determined by the value of the level modulo~3. Assign the vertices to positions in left-to-right order on this level according to their ordering within this track. Then no two consecutive levels of the drawing can have crossing edges, because such a crossing would also be a crossing in the track layout. Therefore, this assignment of vertices to levels and to positions within these levels gives a leveled planar drawing of~$G$.
\end{proof}

\subsection{Leveled planarity and layered pathwidth}
\label{LayeredPathwidth}

The following lemma will allow us to build a layered path decomposition of a leveled planar graph greedily, one bag at a time.

\begin{lemma}
\label{lem:next-bag}
Let $G$ be a graph with a leveled planar drawing, and let $S$ be a subset of vertices of $G$ containing one vertex $s_i$ in each level~$i$ of the drawing. Suppose also that there exists at least one vertex in $S$ that is not the rightmost vertex in its level. Then there exists~$i$ such that $s_i$ is not rightmost in its level, and such that each neighbor of $s_i$ either belongs to $S$ or is positioned to the left of a vertex in $S$ within its level.
\end{lemma}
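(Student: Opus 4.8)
The plan is to argue by contradiction, exploiting the fact that in a leveled planar drawing the edges joining any two consecutive layers form a non-crossing set. Call a vertex \emph{ahead} if it lies strictly to the right of the vertex of $S$ on its own layer, and say that $s_i$ \emph{points up} (resp.\ \emph{points down}) if it has a neighbor that is ahead and lies on layer $i+1$ (resp.\ layer $i-1$); since a neighbor of $s_i$ can only occur on layers $i\pm1$, a failure of the desired conclusion at an index $i$ means precisely that $s_i$ points up or points down. Let $A=\{i : s_i\text{ is not rightmost in layer }i\}$, which is nonempty by hypothesis, and suppose for contradiction that every $s_i$ with $i\in A$ points up or down.

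The crux of the argument is a single crossing observation: no $s_i$ can point up while $s_{i+1}$ points down. Indeed, in that case we would have an edge from $s_i$ to an ahead vertex $v$ on layer $i+1$ and an edge from $s_{i+1}$ to an ahead vertex $u$ on layer $i$. On layer $i$ the vertex $s_i$ precedes $u$ (which is ahead of $s_i$), while on layer $i+1$ the vertex $v$ (ahead of $s_{i+1}$) follows $s_{i+1}$; these two edges therefore cross, contradicting the non-crossing condition on the strip between layers $i$ and $i+1$. I expect this step to be the main point of the proof, since everything else is bookkeeping built on top of it.

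Granting the crossing observation, I would finish by a downward cascade. Let $m=\max A$. Since there are no ahead vertices above layer $m$, the vertex $s_m$ cannot point up, so by assumption it must point down. By downward induction I would then show that $s_\ell$ points down for every layer $\ell\le m$: if $s_{\ell+1}$ points down, its ahead neighbor on layer $\ell$ witnesses $\ell\in A$, so by assumption $s_\ell$ points up or down; but $s_\ell$ pointing up together with $s_{\ell+1}$ pointing down is exactly the forbidden crossing, so $s_\ell$ points down. Applying this to the bottommost layer of the drawing produces a vertex that points down yet has no layer beneath it, a contradiction. Hence some $s_i$ with $i\in A$ points neither up nor down, which is exactly the conclusion of the lemma.
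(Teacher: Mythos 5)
Your proof is correct and rests on the same key observation as the paper's: two ``ahead-pointing'' edges in opposite directions between consecutive layers would cross. The paper packages the bookkeeping as a directed graph on the layers (necessarily a sub-path orientation, hence having a vertex with incoming but no outgoing edges), whereas you unfold the same structure as an explicit downward cascade terminating at the bottommost layer; these are the same argument in different clothing.
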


\begin{proof}
If every vertex in $S$ is either rightmost in its level or has no neighbor to the right of $S$, then we are done, for we may choose $s_i$ to be any vertex that is not rightmost in its level.

Otherwise, draw a directed graph $D$ whose vertices are the levels of $G$, with an edge from level $i$ to level $j$ ($j=i\pm 1)$ if  $s_i$ has a neighbor in level $j$ to the right of $s_j$. Then $D$ cannot contain edges in both directions between $s_i$ and $s_j$, for the corresponding edges in $G$ would necessarily cross, so $D$ must be a subgraph of an oriented path, and in particular must be a directed acyclic graph. By the assumption that at least one vertex in $S$ has a neighbor to the right of $S$, $D$ has at least one edge. Therefore, $D$ contains a vertex $i$ (a level of the drawing) that has incoming edges but that does not have any outgoing edges. For this level, $s_i$ is not rightmost in its level (else $i$ could have no incoming edges) but has no neighbors to the right of~$S$ (else $i$ would have an outgoing edge), as desired. 
\end{proof}

\begin{theorem}
\label{thm:lp2lpw1}
A graph $G$ is leveled planar if and only if it has layered pathwidth~1.
\end{theorem}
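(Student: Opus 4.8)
The plan is to prove both implications, in each case matching the levels of a leveled planar drawing with the layers of a layered path decomposition (and conversely), so the layering is essentially fixed and all the work goes into the bag structure and the left-to-right orderings. The crucial observation for the reverse direction is that width $1$ means each bag meets each layer in at most one vertex, which immediately forbids any edge joining two vertices of the same layer (such an edge would have to lie in a common bag). Hence in a width-$1$ decomposition every edge joins two \emph{consecutive} layers, which is exactly the condition required of a leveled planar drawing.

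For the direction ``leveled planar $\Rightarrow$ layered pathwidth~$1$,'' I would build the path decomposition by a left-to-right sweep whose state $S$ always holds exactly one vertex per layer (a \emph{frontier}). Start with $S$ equal to the leftmost vertex of each layer, take this as the first bag, and repeatedly apply \cref{lem:next-bag}: as long as some frontier vertex is not rightmost, the lemma supplies an index $i$ for which $s_i$ is not rightmost and every neighbor of $s_i$ is in $S$ or already to its left; advance $s_i$ to the next vertex of its layer and record the new $S$ as the next bag. Because each step moves one frontier vertex strictly rightward, each vertex is the frontier of its layer over a contiguous block of steps (the interval property), every bag meets each layer in exactly one vertex (width~$1$), and the sweep terminates at the all-rightmost frontier. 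The one thing that needs proof is edge coverage: for an edge $vw$, if neither endpoint is ever advanced past then both occupy the final bag; otherwise consider the first of the two to leave $S$, say $v$, and note that at the step $v$ is advanced past, \cref{lem:next-bag} guarantees its neighbor $w$ lies in $S$ or to the left—but $w$ has not yet left, so $w\in S$ and that bag contains both endpoints.

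For the converse, given a width-$1$ decomposition let $I_x=[l_x,r_x]$ be the contiguous block of bags containing vertex $x$. Within a single layer the intervals are pairwise disjoint (two overlapping ones would share a bag), hence linearly ordered, and I use this order as the left-to-right order on that level. Since every edge joins consecutive layers, the only possible crossings are between two edges $uv$ and $u'v'$ joining the same pair $L_j,L_{j+1}$ with $u,u'\in L_j$ and $v,v'\in L_{j+1}$ ordered oppositely, say $u$ left of $u'$ but $v$ right of $v'$. Then $r_u<l_{u'}$ and $r_{v'}<l_v$, while each edge forces its two intervals to overlap, giving $l_v\le r_u$ and $l_{u'}\le r_{v'}$; chaining these yields $l_v\le r_u<l_{u'}\le r_{v'}<l_v$, a contradiction. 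I expect the forward direction—specifically the edge-coverage argument that licenses the greedy sweep—to be the main obstacle, since it is precisely what \cref{lem:next-bag} was engineered to support; the converse is then a short interval computation.
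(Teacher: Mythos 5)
Your proposal is correct and follows essentially the same route as the paper: the forward direction is the same greedy frontier sweep licensed by \cref{lem:next-bag}, with the same edge-coverage argument (an endpoint cannot be advanced past until its neighbor has entered the frontier), and the converse is the same bag-interval ordering of vertices within each level, which you merely make more explicit via the chain $l_v\le r_u<l_{u'}\le r_{v'}<l_v$. No gaps.
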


\begin{proof}
In one direction, suppose that $G$ has layered pathwidth 1. We can construct a leveled drawing of $G$ from its layered path decomposition, by using the layers of the layered path decomposition as the levels of the leveled drawing. Because the layered pathwidth is 1, any two vertices in the same level occur within disjoint intervals of the sequence of bags of the path decomposition of $G$, and so we can order the vertices within each level of the drawing by the ordering of their bags in the path decomposition. Each edge of $G$ joins two vertices in consecutive levels of this drawing (no edge joins two vertices in the same level because then the bag containing its endpoints would intersect that level in a set of size two or more). Draw each edge straight. Suppose that edges $vw$ and $xy$ cross, where $v$ is to the left of $x$ in one level, and $y$ is to the left of $w$ in a consecutive level. Some bag $B$ contains both $v$ and $w$. Every bag containing $y$ is to to the left of $B$, and every bag containing $x$ is to the right of $B$. Thus no bag contains both $x$ and $y$. This contradiction shows that no two edges cross. 


In the other direction (implicit in \cite[Lemma~1]{Dujmovic-etal-Algo08}), suppose that $G$ has a leveled planar drawing. We must show that this information can be used to find a layered path-decomposition of $G$ with width 1. For the layering of this layered path-decomposition, we use the sequence of levels of the drawing of $G$; because the drawing is assumed to have no dummy vertices, this satisfies the definitional requirement of a layering, that each edge connect vertices in the same layer or in two consecutive layers. For the path decomposition, we use a sequence of bags with one vertex per layer, this first of which is the set of vertices that are leftmost in their layer of the drawing. We construct this sequence of bags using a greedy algorithm from this starting bag, at each step using \cref{lem:next-bag} to find a vertex $v$ whose neighbors all belong to the present bag or earlier bags, and forming the next bag by replacing $v$ with the next vertex to the right of $v$ in the same level. In this way, by construction, each vertex belongs to a consecutive subsequence of bags. Each edge $vw$ has both neighbors in at least one bag (the first bag in the sequence to include the second of its two endpoints), because until the second endpoint has been introduced as part of the sequence of bags, the first endpoint cannot be replaced. Thus, we have a path decomposition with one vertex per layer, showing that the layered pathwidth is 1.
\end{proof}

\cref{LeveledPlanarBipartite3Track,thm:lp2lpw1} imply:

\begin{corollary}
\label{Equivalences}
The following are equivalent for a graph $G$:
\begin{itemize}
\item $G$ is leveled planar,
\item $G$ has layered pathwidth 1,
\item  $G$ is bipartite and has a 3-track layout. 
\end{itemize}
\end{corollary}

Note that \cref{Equivalences} is best possible, in the sense that there are leveled planar graphs that are not 2-track graphs (since 2-track graphs are simply forests of caterpillars \cite{HS-UM-72}). On the other hand, we conjecture that every 3-track graph (without restriction on bipartiteness) has bounded layered pathwidth. This conjecture would be false for 4-track graphs (see \cref{apexTree}).

\subsection{Weakly leveled planarity and layered pathwidth}
\label{sec:weaklevel}

A \emph{weakly leveled planar drawing} of a graph $G$ is a straight-line crossing-free drawing of $G$ in the plane, such that the vertices are placed on a sequence of parallel lines (again called \emph{levels}) and each edge joins two vertices that either belong to the same level or to consecutive levels. That is, we relax the definition of leveled planar drawings to allow edges between consecutive vertices on the same level.

\begin{theorem}
\label{WeakLayeredPathwidth}
If a graph $G$ has a weakly leveled planar drawing, then $G$ has layered pathwidth at most~$2$.
\end{theorem}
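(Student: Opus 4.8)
The plan is to reuse the width-$1$ construction from \cref{thm:lp2lpw1} for the ``vertical'' part of the drawing and to pay one extra unit of width to cover the ``horizontal'' edges. Let the layers of the layered path decomposition be the levels of the given weakly leveled planar drawing; since every edge of $G$ joins two vertices on the same level or on consecutive levels, this is a valid layering. Call an edge \emph{horizontal} if its endpoints lie on the same level (necessarily consecutively, by the definition of a weakly leveled drawing) and \emph{vertical} otherwise, and let $G'$ be the graph obtained from $G$ by deleting all horizontal edges. The same drawing, with the horizontal edges erased, is a genuine leveled planar drawing of $G'$: every remaining edge joins consecutive levels, and erasing edges cannot create crossings. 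Hence \cref{thm:lp2lpw1} applies to $G'$.

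First I would record the precise form of the width-$1$ decomposition produced in the proof of \cref{thm:lp2lpw1}. That construction starts from the bag consisting of the leftmost vertex on each level and, using \cref{lem:next-bag} repeatedly, forms each successive bag by replacing a single vertex by its right neighbour on the same level. Thus it yields a sequence of bags $B_1,\dots,B_N$, each containing exactly one vertex per layer, in which consecutive bags differ only in a single layer, and in which the frontier vertex of each layer marches through all vertices of that layer in left-to-right order. In particular, every pair of consecutive vertices $u,v$ on a level---and so the endpoints of every horizontal edge---appears as the old and new frontier vertex at exactly one transition step $B_t\to B_{t+1}$.

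Next I would reinsert the horizontal edges. For each horizontal edge $uv$, with $u$ immediately left of $v$ on its level, let $t$ be the unique step at which the frontier of that layer passes from $u$ to $v$, and splice a new bag $B_t\cup\{v\}$ between $B_t$ and $B_{t+1}$. Each inserted bag meets its own layer in the two vertices $u,v$ and meets every other layer in a single vertex, so the layered width never exceeds $2$. The main thing to check---and the only real obstacle---is that the spliced sequence is still a path decomposition of $G$. Edge coverage is immediate (vertical edges are covered exactly as before, and each horizontal edge $uv$ now lies in its inserted bag), so the crux is that every vertex still occupies a contiguous run of bags: an inserted bag sits between the last bag in which $u$ is the frontier vertex and the first in which $v$ is, it contains exactly the same other-layer vertices as its two neighbours, and it contains both $u$ and $v$; hence each of $u$ and $v$ merely has its run extended by one bag while no other vertex's run is interrupted. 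This holds simultaneously for all horizontal edges, including several incident to a common vertex on a single level, so the construction produces a layered path decomposition of $G$ of width at most~$2$.
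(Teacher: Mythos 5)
Your proof is correct and is essentially the paper's own argument: both rely on the greedy width-$1$ construction of \cref{thm:lp2lpw1} (via \cref{lem:next-bag}, applied ignoring same-level edges) and cover each horizontal edge $uv$ with an intermediate bag containing both $u$ and $v$ at the transition where the frontier of their layer advances from $u$ to $v$. The only difference is presentational --- the paper inserts the two-vertex bag as part of each greedy step (add $w$, then remove $v$), while you build the width-$1$ decomposition of the graph minus horizontal edges first and then splice in the extra bags --- but the resulting decomposition and the verification are the same.
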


\begin{proof}
The proof is almost the same as that of \cref{thm:lp2lpw1}.
We use the sequence of levels of the drawing as the layers of a layered path-decomposition,
and form a sequence of bags with one vertex per layer, covering the edges that connect two consecutive layers. As in \cref{thm:lp2lpw1} we construct this sequence of bags greedily, using \cref{lem:next-bag} to find a vertex $v$ whose neighbors on adjacent levels all belong to the present bag or earlier bags. In the proof of \cref{thm:lp2lpw1}, the next bag was formed by replacing $v$ with the next vertex $w$ to the right of $v$ on the same level, but if we did that we might form a sequence of bags that did not include a bag containing both $vw$, violating the definition of a path decomposition if $v$ and $w$ are adjacent. Instead, we first add $w$ to the present bag, forming a bag whose intersection with $v$'s layer has two vertices, and then we form a second bag by removing $v$. The result is a path decomposition: every edge between consecutive levels is represented in at least one bag by the same reasoning as in the proof of \cref{thm:lp2lpw1}, and every edge with two endpoints on the same level is represented in at least one bag by construction.
Its largest intersection with a level has size two, so the layered pathwidth is~$2$.
\end{proof}


\subsection{Layered pathwidth and track-number}
\label{BeyondBipartiteness}

Dujmovi\'c, Morin, and Wood~\cite{DujMorWoo-SJC-05} proved that every graph with pathwidth $k$ has track-number at most $k+1$. Here we provide the following qualitative strengthening (since there are graph classes with bounded layered pathwidth and unbounded pathwidth). 

\begin{lemma}
\label{qntnpb}
Every graph with layered pathwidth $\ell$ has track-number at most $3\ell$. 
\end{lemma}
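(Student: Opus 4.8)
The plan is to convert a layered path decomposition of width $\ell$ directly into a track layout, using the layers to generate tracks and the path-decomposition ordering to order the vertices within each track. Recall that Lemma~\ref{lem:spiral} already handles the special case $\ell = 1$ in spirit: there, each layer contributes a single vertex per bag, and the levels are wrapped onto $3$ tracks by taking the level number modulo~$3$. The natural generalization is to split each layer into $\ell$ sub-layers (one for each of the $\ell$ vertices a bag may contain in that layer) and then wrap these sub-layers onto tracks by their index modulo~$3$, giving $3\ell$ tracks in total.

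First I would fix a layered path decomposition of $G$ of width $\ell$, with bags $B_1, B_2, \dots$ in path order and layers $L_0, L_1, \dots$ Because each bag meets each layer in at most $\ell$ vertices, I can assign to each vertex $v$ a pair $(\lambda(v), \sigma(v))$, where $\lambda(v)$ is the index of the layer containing $v$ and $\sigma(v) \in \{1, \dots, \ell\}$ is a ``slot'' chosen so that within any single bag, the vertices in a common layer receive distinct slots. Obtaining a globally consistent slot assignment $\sigma$ is the step that needs care: I want vertices sharing a layer to keep the same slot over the whole interval of bags they occupy, so that two vertices in the same layer and same slot are never simultaneously present in a bag, hence occupy disjoint bag-intervals and can be linearly ordered by those intervals. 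I would define the track of $v$ to be the pair $(\lambda(v) \bmod 3, \sigma(v))$, yielding $3\ell$ tracks, and order each track by the left-to-right order of the corresponding bag-intervals (breaking ties by layer index).

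Next I would verify the two track-layout axioms. For independence, two vertices on the same track share a layer (same residue and slot forces the same layer, using that adjacent layers differ in residue) and the same slot, so by construction their bag-intervals are disjoint; since an edge would force its endpoints into a common bag, no edge joins them, so each track is an independent set. For the non-crossing condition, I would argue as in Lemma~\ref{lem:spiral}: any two edges mapped to the same pair of tracks either join the same pair of layers or different pairs. When they join the same pair of layers, a crossing would correspond to two edges whose endpoint bag-intervals interleave in a way incompatible with the ordering of a path decomposition; when they join different layer pairs, the modulo-$3$ wrapping separates them by level, exactly as in Lemma~\ref{lem:spiral}.

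The main obstacle I anticipate is the slot assignment and the verification that the within-track ordering is genuinely crossing-free. The difficulty is that, unlike the width-$1$ case where each layer supplies a single linearly ordered sequence, here several vertices of one layer can coexist in a bag, so I must argue that an ordering by bag-intervals is well-defined and consistent for edges. The cleanest route is probably to reduce to the $\ell = 1$ case by a product construction: observe that a layered path decomposition of width $\ell$ refines into $\ell$ interleaved structures each behaving like width~$1$, so that the non-crossing guarantee of Lemma~\ref{lem:spiral} can be invoked slotwise. I expect that once the slot assignment is pinned down carefully, the crossing analysis follows the template of Lemma~\ref{lem:spiral} with only bookkeeping changes.
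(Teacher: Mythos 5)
Your architecture---split each layer's vertices into $\ell$ slots so that same-slot vertices of a layer never share a bag, order each slot class by its bag-interval, and wrap the layers modulo $3$---is exactly the paper's proof, so the overall route is the same rather than a genuinely different one. The slot assignment you leave open is not a real obstacle: since each bag meets each layer in at most $\ell$ vertices, the bag-intervals of a layer's vertices form an interval graph with clique number at most $\ell$, which a greedy left-to-right colouring handles (the paper phrases this as a proper $\ell$-colouring of $G[V_j]$, using that it has pathwidth at most $\ell-1$); the X-crossing argument you only gesture at is then a short chase of leftmost-bag indices. Two details in your write-up are wrong as stated, however. First, ``same residue and slot forces the same layer'' is false: layers $j$ and $j+3$ share a residue. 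Independence of a track still holds, but for a different reason---two such vertices lie in layers at distance at least $3$, and the layering forbids edges between them. Second, your within-track order ``by bag-intervals, breaking ties by layer index'' has the sort keys reversed: the track must be the concatenation $V_{q,a}, V_{q+3,a}, V_{q+6,a},\dots$ in layer order, with the bag-interval order used only inside each single $V_{j,a}$. If you sort primarily by bag-interval, an edge from layer $0$ to layer $1$ can cross an edge from layer $3$ to layer $4$ whenever the intervals of the layer-$0$ and layer-$3$ endpoints are ordered oppositely to those of the layer-$1$ and layer-$4$ endpoints; the wrapping is only safe because every layer-$j$ vertex precedes every layer-$(j+3)$ vertex on its track, exactly as in the ordering used in \cref{lem:spiral} and in the wrapping lemma the paper cites.
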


\begin{proof}
Let $B_1,B_2,\dots,B_n$ be a path decomposition of $G$ with layered width $\ell$. Let $(V_0,V_1,\dots,V_m)$ be the corresponding layering. Thus, each bag $B_i$ contains at most $\ell$ vertices in each layer $V_j$. Since $G[V_j]$ has pathwidth at most $\ell-1$, there is a proper colouring of $G[V_j]$ with colours $1,2,\dots,\ell$. 
For each vertex $v$ of $G$, let $b(v):=\min\{i:v\in B_i\}$ be the index of the leftmost bag containing $v$. 
For $0\leq j\leq m$ and $1\leq a\leq\ell$, let $V_{j,a}$ be the set of vertices in $V_j$ coloured $a$. 
Let $\preceq$ be the total order of $V_{j,a}$ defined by $v\prec w$ if and only if $b(v)< b(w)$. 
Clearly $\preceq$ is a total order. 

\begin{figure}[h]
\centering
\includegraphics[height=80mm]{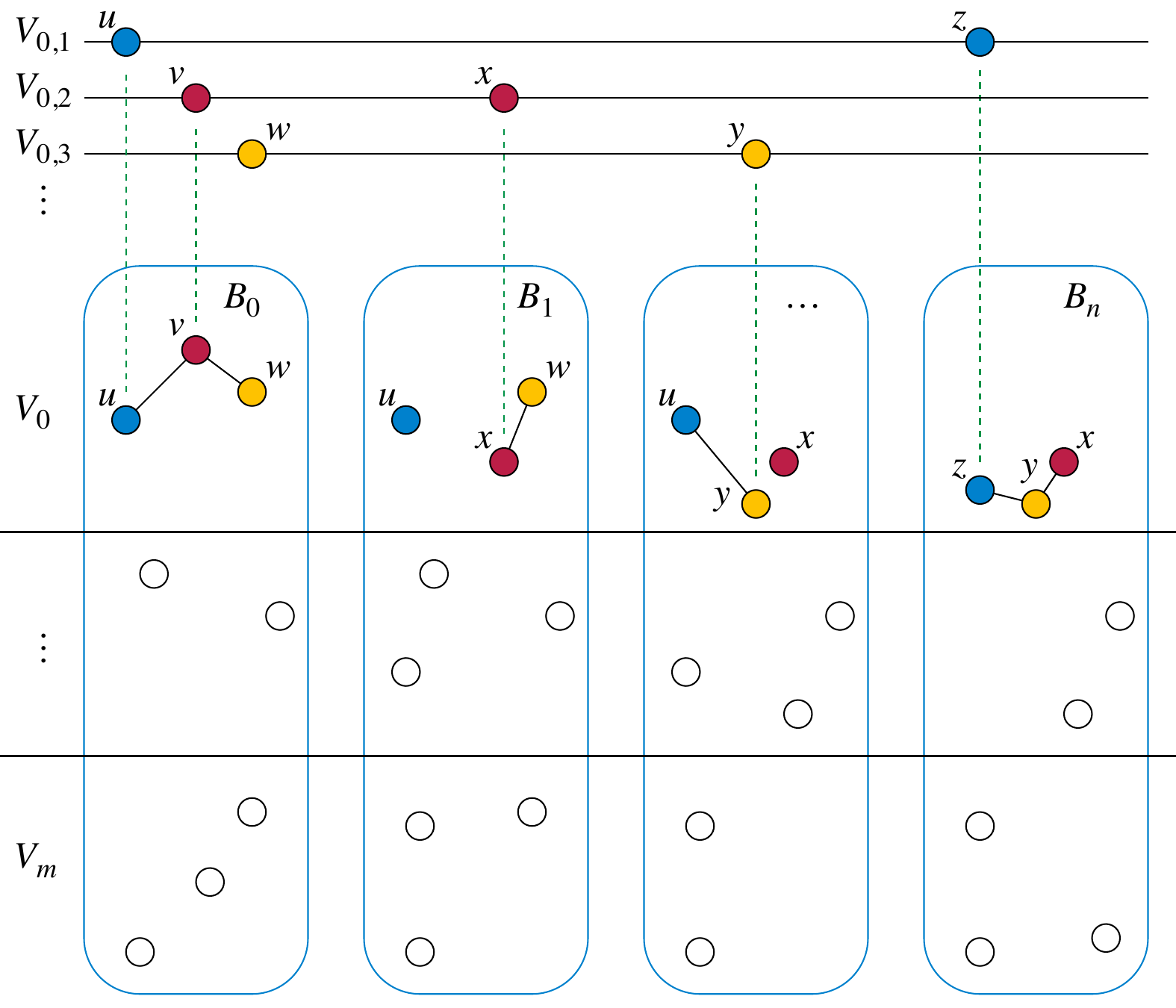}
\caption{\label{tracks-from-colors}
Construction of track layout in \cref{qntnpb}.}
\end{figure}

Since $G[V_j]$ is properly coloured, $V_{j,a}$ is a track. Suppose on the contrary that $vw$ and $xy$ form an X-crossing between $V_{j,a}$ and $V_{k,b}$, where $v\prec x$ in $V_{j,a}$ and $y\prec w$ in $V_{k,b}$. Without loss of generality, $b(w)\leq b(x)$. Since $y\prec w$ we have $b(y)<b(w)\leq b(x)$. Since $xy$ is an edge, $y\in B_{b(w)}$. Hence $y$ and $w$ are adjacent in $G[V_k]$, which is a contradiction since $y$ and $w$ are assigned the same colour. Therefore there is no X-crossing, and $\{V_{j,a}:0\leq j\leq m,1\leq a\leq \ell\}$ is a track layout of $G$. Since $(V_0,V_1,\dots,V_m)$ is a layering, if $vw$ is an edge of $G$ with $v\in V_{j,a}$ and $w\in V_{k,b}$, then $|j-k|\leq 1$. It follows from a result of Dujmovi\'c, Por and Wood~\cite[Lemma~6 with $s=1$]{DujPorWoo-DMTCS-04} that this track layout can be wrapped onto $3\ell$ tracks. In particular, as illustrated in \cref{wrap-tracks}, for $q\in\{0,1,2\}$ and $1\leq a\leq \ell$, let $W_{q,a}$ be the track $V_{q,a},V_{q+3,a},V_{q+6,a},\dots$. Then $\{W_{q,a}:q\in\{0,1,2\},1\leq a\leq \ell\}$ is a $3\ell$-track layout of $G$. 
\end{proof}

\begin{figure}[h]
\centering
\includegraphics[height=80mm]{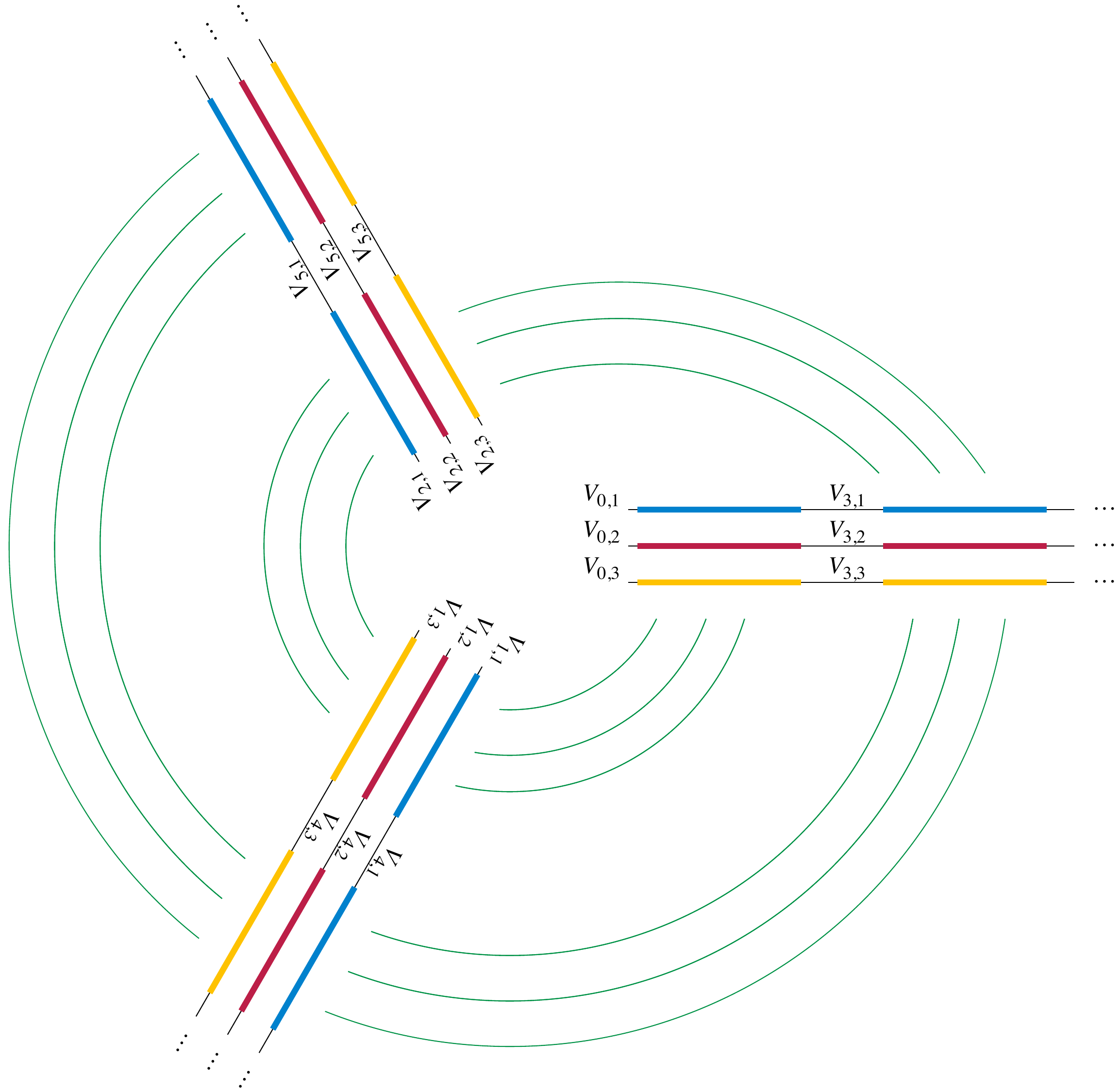}
\caption{\label{wrap-tracks}
Wrapping the track layout in  \cref{qntnpb}.}
\end{figure}

%
%
%
%
%
%

There is a natural connection between layered treewidth and layered pathwidth. 

\begin{lemma}
\label{log}
Every $n$-vertex graph $G$ with layered treewidth $\ell$ has layered pathwidth at most $\ell\log_3(2n+1)$. 
\end{lemma}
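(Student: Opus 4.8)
The plan is to keep the layering $(V_0,\dots,V_m)$ fixed throughout and to turn the given layered tree decomposition into a layered path decomposition by a balanced recursive linearization of its tree $T$. The layered width is then controlled through one simple observation: if an output bag is the union of $d$ of the original bags, then it meets each layer in at most $d\ell$ vertices, since each original bag meets each layer in at most $\ell$. So it suffices to linearize $T$ so that every output bag is a union of at most $\log_3(2n+1)$ of the original bags.

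Concretely, I would argue by induction on $n$. A tree decomposition always contains a bag $B_x$ that acts as a separator: the subtrees of $T-x$ group the components of $G-B_x$, and these components are pairwise nonadjacent. I would choose such a separator so that every component has at most $(n-1)/3$ vertices. Since the components are mutually nonadjacent, the path decompositions produced recursively for them can be concatenated in any order, with the separator bag $B_x$ inserted into every bag of the concatenation; this covers all edges incident to $B_x$ and keeps each separator vertex in a contiguous run. Each recursive level then adds at most one extra original bag to each output bag, and the choice $(n-1)/3$ makes the quantity $2n+1$ drop by a factor of $3$ per level, so the recursion has depth at most $\log_3(2n+1)$ (the base case of a single vertex has depth $1=\log_3 3$). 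Hence every output bag is a union of at most $\log_3(2n+1)$ original bags, and meets each layer in at most $\ell\log_3(2n+1)$ vertices. Verifying that the concatenation is a genuine path decomposition—each vertex occupies a contiguous interval of bags and each edge lies in a common bag—is routine once the separator vertices are held throughout their subinterval.

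The main obstacle is producing, at each level, a separator that is simultaneously \emph{thin in every layer} (so that it adds only $\ell$, and not more, to the per-layer width) and balanced enough to force the factor-$3$ reduction. A single bag is automatically $\ell$-thin per layer, but a single-bag separator only guarantees components of size at most $n/2$ (indeed, for a path this halving is essentially optimal), which would yield only the weaker bound $\ell\log_2 n$. Squeezing the reduction down to $(n-1)/3$, and thereby replacing $\log_2 n$ by $\log_3(2n+1)$, is the delicate step where I expect the real work to lie: the tension is that combining a bounded number of bags (say along a centroid path) to improve the balance also degrades the per-layer thinness of the separator, so one must argue that the combined separator can still be charged at only $\ell$ per layer per level, or prove a sharper balanced-separator lemma tailored to the layered setting. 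Once the right separator is in hand, the per-layer counting and the path-decomposition axioms follow straightforwardly.
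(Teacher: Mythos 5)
Your opening reduction is exactly the one the paper uses: keep the layering fixed and observe that if every bag of the output path decomposition is a union of at most $k$ of the original bags, then its layered width is at most $k\ell$; so it suffices to linearize $T$ so that each output bag collects at most $\log_3(2n+1)$ original bags. But the step you explicitly defer --- achieving balance factor $3$ rather than $2$ --- is the entire content of the lemma beyond that reduction, and your proposal does not supply it. As you note yourself, a single separator bag only guarantees components of size roughly $n/2$, which yields $\ell\log_2 n$; and the $(n-1)/3$-balanced single-bag separator your recursion would need does not exist in general (a path with its trivial decomposition is already a counterexample). So the argument as written proves only an $O(\ell\log n)$ bound with the wrong constant, and is genuinely incomplete at its crucial point.

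The paper closes this gap by outsourcing it: after contracting tree edges whose endpoints have equal bags, $T$ has at most $n$ nodes, and Scheffler's theorem gives a path decomposition of the $n$-node tree $T$ itself in which each bag contains at most $\log_3(2n+1)$ tree nodes; replacing each tree node $x$ by its bag $B_x$ then yields the desired layered path decomposition of $G$. Two remarks on why the tensions you anticipate do not arise there. First, the base-$3$ logarithm does not come from a $1/3$-balanced vertex separator: the standard proof of Scheffler's bound separates $T$ along a \emph{path} $P$ of tree nodes chosen so that every component of $T-V(P)$ has strictly smaller pathwidth, and the recursion depth is controlled because a tree of pathwidth $k$ must contain a vertex with three disjoint branches of pathwidth $k-1$, hence has a number of nodes exponential in $k$ with base $3$. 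Second, your worry that a path-shaped separator ``degrades the per-layer thinness'' is unfounded in that construction: each bag of the resulting path decomposition contains only one or two nodes of $P$ (the locally relevant attachment points), not all of $P$, so each recursion level still charges only a single original bag, i.e.\ only $\ell$ per layer. To complete your proof you should either reprove Scheffler's bound along these lines or simply cite it and compose, as the paper does.
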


\begin{proof} 
Let $(B_x:x\in V(T))$ be a tree decomposition  of $G$ with layered width $\ell$.  That is, each bag $B_x$ contains at most $\ell$ vertices in some layering. If $B_x=B_y$ for some edge $xy\in E(T)$, then contracting $xy$ gives a tree decomposition with layered width $\ell$. Thus, we may assume that $B_x\neq B_y$ for each edge $xy\in E(T)$. It follows that $T$ has at most $n$ vertices. Scheffler~\cite{Sch92} proved that every $n$-vertex tree has pathwidth at most $\log_3(2n+1)$. Let $B_1,\dots,B_m$ be a path decomposition of $T$ with width $\log_3(2n+1)$. Let $B'_i:=\cup\{V_x:x\in B_i\}$. Then  $B'_1,\dots,B'_m$ is a path decomposition of $G$ with layered width at most $\ell\log_3(2n+1)$ (with respect to the initial layering). 
\end{proof}

\cref{qntnpb,log} imply the following result, which improves the constant factor in a result of Dujmovi\'c~\cite{Duj15}.

\begin{theorem}
Every $n$-vertex graph with layered treewidth $\ell$ has track-number at most $3\ell\log_3(2n+1)$.  
\end{theorem}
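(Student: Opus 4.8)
The plan is to obtain this bound as a direct composition of the two preceding lemmas, which together form a chain: layered treewidth $\to$ layered pathwidth $\to$ track-number. First I would take an $n$-vertex graph $G$ of layered treewidth $\ell$ and invoke \cref{log} to conclude that $G$ has layered pathwidth at most $\ell' := \ell\log_3(2n+1)$. This is the step that does the real structural work, since it is where the underlying tree of the layered tree-decomposition is straightened into a path, paying only the $\log_3(2n+1)$ factor in the width by way of Scheffler's bound on the pathwidth of $n$-vertex trees. Importantly, \cref{log} produces its layered path decomposition with respect to the \emph{same} layering as the given tree-decomposition, so the parameter $\ell$ carried forward refers to that same fixed layering.

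Next I would feed this layered path decomposition into \cref{qntnpb}, which guarantees that any graph of layered pathwidth $\ell'$ has track-number at most $3\ell'$. Substituting $\ell' = \ell\log_3(2n+1)$ yields track-number at most $3\ell\log_3(2n+1)$, which is precisely the claimed bound. Because \cref{qntnpb} also operates relative to the layering of the input decomposition, the two applications are compatible and no reconciliation of layerings is needed between them.

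There is essentially no obstacle here beyond checking that the constants line up: the factor of $3$ from the track-layout construction of \cref{qntnpb} multiplies the $\log_3(2n+1)$ factor from the path-decomposition step, with no additional multiplicative or additive overhead introduced in passing from one lemma to the other. The sole point worth emphasizing is that the whole argument is an honest composition rather than a fresh construction, and that the improvement over Dujmovi\'c's earlier bound~\cite{Duj-JCTB-15} comes purely from the sharper constant factors already established in \cref{qntnpb} and \cref{log}; I would therefore state the proof as the single sentence that the two lemmas compose, leaving the constant tracking implicit.
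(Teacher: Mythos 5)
Your proposal is exactly the paper's argument: the theorem is stated as an immediate consequence of composing \cref{log} (layered treewidth $\ell$ gives layered pathwidth at most $\ell\log_3(2n+1)$) with \cref{qntnpb} (layered pathwidth $\ell'$ gives track-number at most $3\ell'$), with no further proof given. Your additional remark that both lemmas operate relative to the same layering is a correct and worthwhile sanity check, but nothing beyond the straightforward composition is needed.
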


Note that, similar to layered treewidth \cite{DMW17}, layered pathwidth is also not a minor-closed parameter. For example,  it is easily seen that the $n \times n \times 2$ grid graph has layered pathwidth at most $3$, but $G$ contains a $K_n$ minor \cite{DW-NY11}, and $K_n$ has layered pathwidth $\lceil n/2 \rceil$.

\section{Special classes of graphs}
\label{sec:special}

Here we prove that particular graph families are leveled planar or weakly leveled planar. Our results are based on breadth-first layerings; we define a layering of a graph to be \emph{planar} if there exists a leveled planar drawing of the graph in which the levels of the drawing are the same as the layers of the layering; see \cref{fig:Examples} for examples.

\begin{figure}[t]
\centering\includegraphics[width=\textwidth]{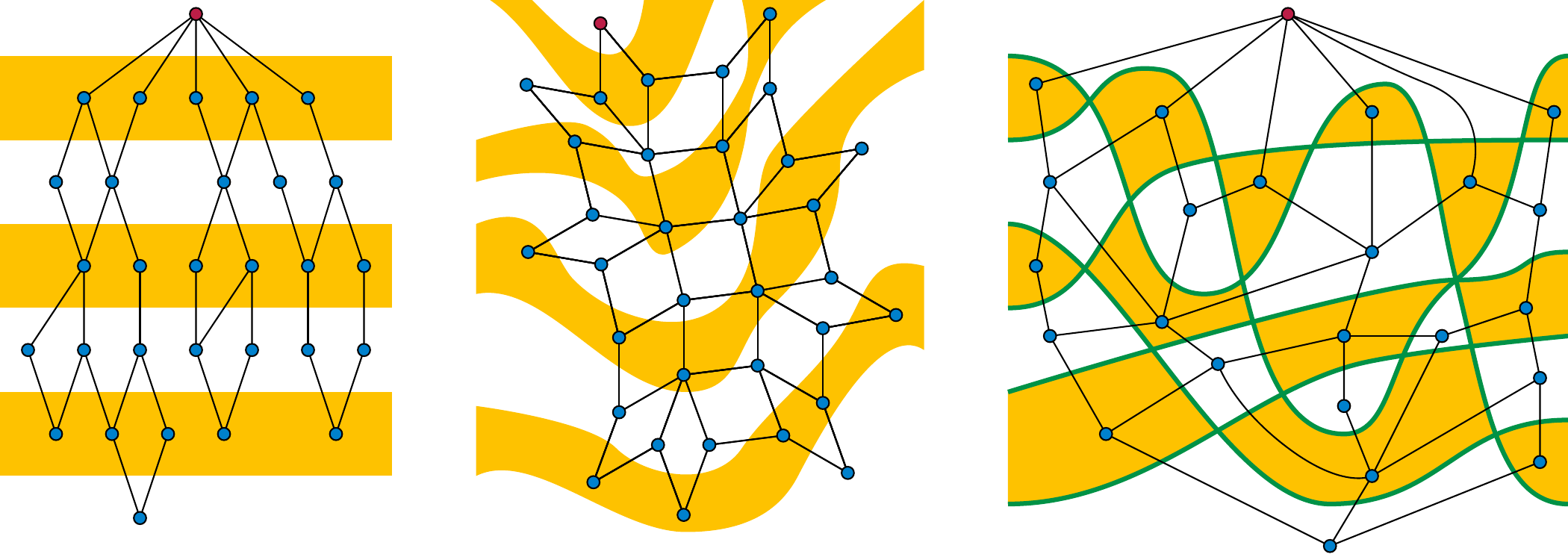}
\caption{Examples of graphs with planar breadth-first layerings (start vertex shown in red, and layering in yellow and white): left, a bipartite outerplanar graph (\cref{outerplanar}); center, a squaregraph (\cref{squaregraph}); and right, the dual graph of an arrangement of doubly-unbounded monotonic curves (\cref{monotone}).
\label{fig:Examples}}
\end{figure}

\subsection{Bipartite outerplanar graphs}

\begin{theorem}[implicit in \cite{FLW-JGAA-03}]
\label{outerplanar}
Every bipartite outerplanar graph is leveled planar. Every breadth first layering of such a graph $G$ gives a leveled planar drawing.
\end{theorem}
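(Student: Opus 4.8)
The plan is to verify the two requirements of a leveled planar drawing separately: that every edge joins two consecutive layers, and that the vertices within each layer can be ordered so that no two edges cross. The first requirement is immediate. In a breadth first layering each edge joins vertices in the same layer or in two consecutive layers; but two vertices in the same breadth first layer have equal distance from the start vertex and hence equal parity, so an edge between them would join two vertices of the same colour class, contradicting bipartiteness. Thus every edge of $G$ joins two consecutive layers, and it remains only to choose the within-layer orderings.

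Next I would reduce to the $2$-connected case. The layers of a breadth first layering of $G$ restrict to breadth first layerings of each block (rooted at the vertex of the block nearest the start vertex), and the block-cut tree lets us nest the drawings of the descendant blocks below their shared cut vertices, so it suffices to produce the ordering for a single $2$-connected bipartite outerplanar graph $B$. Fixing the outerplanar embedding of $B$, its outer boundary is a single cycle $C$, which is even because $B$ is bipartite, and every remaining edge is a chord, with no two chords crossing inside $C$. Following $C$ in either direction from the start vertex, the breadth first distance increases monotonically up to the antipodal vertex, so $C$ alone draws as a ``diamond'' that is manifestly leveled planar; the content of the theorem is that the chords can be added without crossings.

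To order the layers I would argue by induction on the number of layers, peeling off the last layer $V_m$. The graph $G-V_m$ is again connected (the breadth first tree restricted to $V_0\cup\dots\cup V_{m-1}$ spans it), bipartite, and outerplanar, and it inherits the same breadth first layering, so by induction it has a leveled planar drawing realizing this layering. Every vertex of $V_m$ has all of its neighbours in $V_{m-1}$, so adding $V_m$ amounts to attaching a new top layer to the already-fixed ordering of $V_{m-1}$; this can be done without crossings exactly when the sets $N(u)\cap V_{m-1}$, for $u\in V_m$, are intervals of the $V_{m-1}$-ordering that form a laminar family.

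Establishing this laminarity is the main obstacle, and it is where outerplanarity enters. If two top vertices $u,u'$ had neighbourhoods that interleaved in $V_{m-1}$, then two neighbours of $u$, two of $u'$, and a common path back towards the start vertex through the lower layers would exhibit a $K_{2,3}$ minor, which outerplanar graphs forbid; an analogous argument rules out non-contiguous neighbourhoods. To make the induction carry the precise form of the $V_{m-1}$-ordering that guarantees this, I expect to strengthen the inductive hypothesis so that the ordering of the top layer agrees with the cyclic order of the outer boundary of the embedding restricted to that layer. With this strengthened hypothesis the laminarity of the top-layer neighbourhoods follows from the non-crossing of the chords, the new layer can be ordered compatibly, and reassembling the blocks through the block-cut tree completes the leveled planar drawing.
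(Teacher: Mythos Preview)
Two steps in your plan fail concretely. First, the assertion that the BFS distance increases monotonically along the outer Hamiltonian cycle is false: in the $2$-connected bipartite outerplanar graph given by the hexagon $v_0v_1\cdots v_5$ together with the chord $v_0v_3$, rooted at $v_0$, the distances along the outer cycle read $0,1,2,1,2,1$, so the ``diamond'' picture collapses. Second, ``the neighbourhoods form a laminar family of intervals'' is not the correct criterion for attaching the top layer without crossings. Nested intervals are laminar yet force a crossing (if $N(u)=\{a,b,c\}$ and $N(u')=\{b\}$ with $a<b<c$, either order of $u,u'$ creates one), while in the hexagon example the neighbourhoods $N(v_2)\cap V_1=\{v_1,v_3\}$ and $N(v_4)\cap V_1=\{v_3,v_5\}$ properly overlap---hence are not laminar---yet attach cleanly. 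The condition you actually need is that the top-layer vertices can be ordered with $\max N(u)\le\min N(u')$ whenever $u$ precedes $u'$; your $K_{2,3}$-minor sketch does not establish this, and the ``strengthened inductive hypothesis'' is, as you yourself say, only an expectation.

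The paper avoids both the block--cut reduction and the layer induction entirely. It works face by face: for each face cycle $C$ of the outerplanar embedding it argues that two distinct vertices of $C$ nearest to the root, or any non-monotonicity of the BFS distance around $C$, would---via shortest paths from the root together with the two arcs of $C$---yield a $K_{2,3}$ subdivision, impossible in an outerplanar graph. Hence every face cycle is individually leveled planar with respect to the BFS levels, and the proof finishes by invoking a local-to-global criterion of Abel et al.: a plane graph with a given embedding and level assignment is leveled planar iff every face cycle is.
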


\begin{proof}
Let $v$ be the starting vertex of a breadth first layering. Then for each facial cycle $C$ of the outerplanar embedding of $G$, there must be a unique nearest neighbor in $C$ to $v$. For, if $v$ were nearest to distinct vertices $u$ and $w$ in $C$, then by bipartiteness these two vertices must be non-adjacent in $C$. In this case, the graph formed by $C$ together with the shortest paths from $v$ to $u$ and $w$ would contain a subdivision of $K_{2,3}$ (with $u$ and $w$ as the degree three vertices, two paths between them in $C$, and one more path between them through the shortest path tree rooted at $v$), an impossibility for an outerplanar graph. For the same reason, the distances in $v$ from this nearest neighbor or pair of nearest neighbors must increase monotonically in both directions around $C$ until reaching a unique farthest neighbor, because in the same way any non-monotonicity could be used to construct a subdivision of $K_{2,3}$.

Thus, each facial cycle of $G$ has a planar breadth first layering. The result follows from the fact that in a plane graph with an assignment of levels to the vertices, there is a planar drawing consistent with this level assignment and with the given embedding of the graph, if and only if every facial cycle of the given graph has a planar drawing consistent with the level assignment~\cite{AbeDemDem-GD-14}.
\end{proof}

\subsection{Squaregraphs}

A \emph{squaregraph} is defined to be a graph that has a planar embedding in which each bounded face is a $4$-cycle and each vertex either belongs to the unbounded face or has four or more incident edges. These graphs may also be characterized in various other ways, for instance as the dual graphs of hyperbolic line arrangements with no three mutually-intersecting lines~\cite{BanCheEpp-SJDM-10}.

\begin{theorem}
\label{squaregraph}
Every squaregraph $G$ is leveled planar. In fact, every breadth first layering of $G$ rooted at a vertex of the outerface gives a leveled planar drawing.
\end{theorem}

\begin{proof}
Because all their bounded faces are even-sided, squaregraphs are necessarily bipartite, so every choice of starting vertex gives a valid breadth first layering. Bandelt et al~\cite[Lemma 12.2]{BanCheEpp-SJDM-10} prove that, for every choice of starting vertex, we can add extra edges to the squaregraph to form a plane multigraph in which the added edges link each layer into a cycle, and in which these cycles are all nested within each other.

Now, choose the starting vertex $v$ to be a vertex of the outer face. Then each cycle added in this augmentation of $G$ contains an edge that separates $v$ from the unbounded face of the augmented graph. If we remove each such edge from the augmented graph, we break each cycle into a path in a consistent way, such that the path ordering within each layer matches the given planar embedding of~$G$.
\end{proof}

\subsection{Dual graphs of monotone curves}

\begin{theorem}
\label{monotone}
Let $A$ be a collection of finitely many $x$-monotone curves in the plane, such that any two curves intersect at finitely many crossing points, and the projection of $\bigcup A$ onto the $x$-axis covers the entire axis. Then the dual graph of the arrangement of the curves in $A$ is leveled planar, and there is a breadth first layering that gives a leveled planar drawing.
\end{theorem}

\begin{proof}
Each vertex of the dual graph corresponds to a connected component of the complement of $\bigcup A$; we call this the \emph{region} of the vertex. We may assign each vertex to a layer according to the number of curves in $A$ that pass above it; this is a breadth first layering starting from the vertex corresponding to the topmost (unbounded upward) connected component. No two vertices in the same layer have regions that project to overlapping subsets of the $x$-axis, so we may order the vertices within each layer according to the left-to-right ordering of these projections. This ordering is compatible with the planar embedding of the dual graph given by placing a representative point within each region and connecting each two adjacent regions by a curve crossing their shared boundary.
\end{proof}

\begin{figure}[t]
\centering\includegraphics[width=2in]{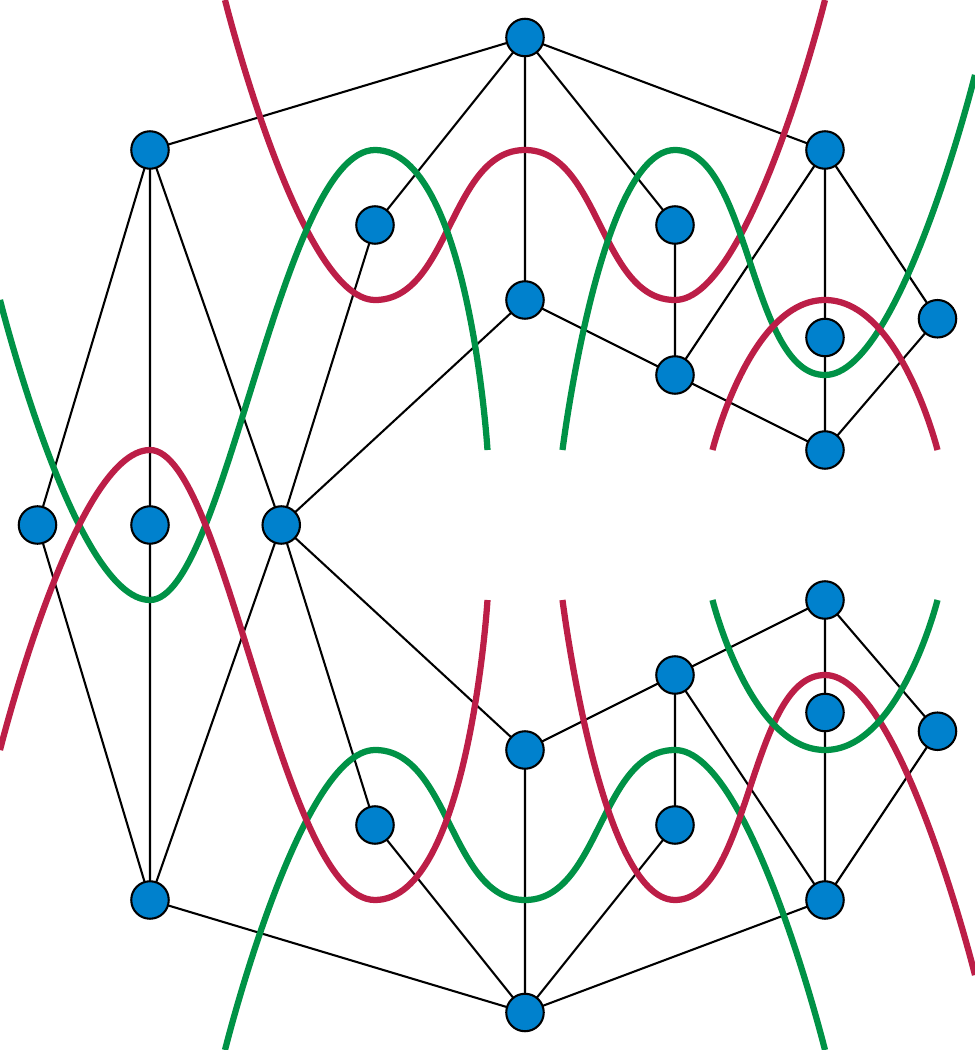}
\caption{An arrangement of monotone curves whose dual graph has no leveled planar drawing.}
\label{fig:BadCurves}
\end{figure}

\cref{fig:BadCurves} gives an example demonstrating that \cref{monotone} cannot be generalized to monotone curves whose projections do not cover the entire axis: it gives a family of monotone curves, all ending within the outer face of their arrangement, such that the dual graph of the arrangement is not leveled planar. The dual graph is made of multiple $K_{2,3}$ subgraphs, each of which must have the 2-vertex side of its bipartition drawn on two layers with the 3-vertex side of its bipartition in a single layer between them; thus, up to top-bottom reflection, there is only a single layering for this graph that could possibly be planar. However, this layering forced by the planarity of the individual $K_{2,3}$ subgraphs is not planar globally, because it forces one of the two arms of the graph (upper and lower right) to collide with the ``armpit'' where the other arm meets the body of the graph (left). The graph is drawn without crossings in the figure, but in a way that does not respect any layering of the graph. This example is also a series-parallel graph, and shows that \cref{outerplanar} cannot be generalized to bipartite series-parallel, treewidth-$2$, or $2$-outerplanar graphs: none of these classes of graphs is leveled planar.

\subsection{Outerplanar graphs}

\begin{theorem}[Felsner, Liotta, and Wismath~\cite{FLW-JGAA-03}]
\label{thm:op-weak-level}
Every outerplanar graph has a weakly leveled planar drawing.
\end{theorem}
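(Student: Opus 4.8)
The plan is to follow the same strategy as in the proof of \cref{outerplanar}, but relaxing from level planarity to weakly leveled planarity so as to accommodate the edges of odd faces that join two vertices of the same layer. I would first reduce to the case that $G$ is connected, drawing the components of a disconnected graph independently and placing them in disjoint horizontal ranges on the same levels. Fixing a starting vertex $v$ on the outer face, I would then take the breadth first layering of $G$ from $v$. By the defining property of breadth first search, every edge of $G$ joins two vertices whose layers are equal or consecutive, so the only thing left to verify is that the vertices can be ordered within each layer so that the resulting layered drawing is both planar and consistent with the given outerplanar embedding. The goal is therefore to show that this breadth first layering is realizable as a \emph{weakly} leveled planar drawing.

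As in \cref{outerplanar}, I would reduce this to a statement about individual faces, using the same plane-graph face-cycle criterion (in its weakly leveled analogue, which holds by the identical local-to-global gluing argument): the whole drawing is realizable if and only if every face cycle is. It then suffices to analyze one bounded face cycle $C$ together with the breadth first distances of its vertices, reusing the $K_{2,3}$-subdivision argument of \cref{outerplanar}. Because $G$ is outerplanar and hence contains no subdivision of $K_{2,3}$, the set of vertices of $C$ nearest to $v$ must be either a single vertex or a single edge: on a face of length at least four, three or more nearest vertices would contain a non-adjacent pair, and two non-adjacent nearest vertices together with two arcs of $C$ and shortest paths in the breadth first tree would form a $K_{2,3}$ subdivision. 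The same holds for the farthest vertices, and any interior local extremum along either arc joining the nearest set to the farthest set would again yield a forbidden $K_{2,3}$, so the distances are monotone along each arc. Such a bitonic profile, possibly with one horizontal edge at the bottom and one at the top, is exactly a cycle drawable as a weakly leveled face.

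The step I expect to be the main obstacle is precisely the handling of the horizontal (same-layer) edges, which do not arise in the bipartite setting of \cref{outerplanar}, and in particular showing that each layer induces a linear forest that can be laid out along a single line compatibly with the embedding. The key sub-case, which also completes the nearest-set argument for triangular faces, is ruling out a triangle whose three vertices share a layer: if $a,b,c$ were equidistant from $v$ at distance $d$, then a shortest $b$--$c$ path lying in the breadth first tree (all of whose interior vertices have distance below $d$), the path $b$--$a$--$c$, and the edge $bc$ would be three internally disjoint $b$--$c$ paths, hence a $K_{2,3}$ subdivision, contradicting outerplanarity. The remaining work is routine bookkeeping: verifying that monotone plateaus along the arcs of each face can be placed consecutively on their line, that the per-face vertex orders agree on shared edges, and that the face-cycle criterion carries over verbatim to the weakly leveled regime.
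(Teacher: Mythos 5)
Your route differs from both proofs the paper points to: Felsner, Liotta, and Wismath give an explicit construction using BFS number and BFS-tree depth as the two coordinates of the drawing, and the paper sketches an alternative induction on the triangular faces of a maximal outerplanar graph. You instead try to transplant the face-by-face argument of \cref{outerplanar}, and that is where the main gap lies. That proof leans on a local-to-global criterion for realizability of a \emph{leveled} planar drawing of a fixed plane embedding with a fixed level assignment, in which every edge joins consecutive levels. You invoke ``the weakly leveled analogue'' and assert that it ``holds by the identical local-to-global gluing argument,'' but no such analogue is proved, and the situation is not identical: same-level edges force each level to induce a linear forest whose components must appear as consecutive runs in a single linear order on that level, consistently across all faces at once. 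This is a global consistency condition that a per-face check does not obviously certify. You flag exactly this as ``the main obstacle'' and then file it under ``routine bookkeeping''; it is in fact the crux of your proof and is left undone.

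There are also two concrete flaws in the per-face analysis. First, the monochromatic-triangle case: the edge $bc$, the path $b$--$a$--$c$, and the BFS-tree path from $b$ to $c$ are indeed three internally disjoint $b$--$c$ paths, but one of them is a single edge, so their union is not a subdivision of $K_{2,3}$; if $b$ and $c$ share a BFS parent $z$, the union is $K_4$ minus an edge, which is outerplanar. To get a contradiction you must also use the BFS-tree connection from $a$'s ancestors back through the root to $z$, contracting it to exhibit a $K_4$ minor. Second, your monotonicity claim rules out interior local extrema along each arc from the nearest set to the farthest set, but a \emph{plateau} --- two adjacent vertices of equal BFS distance strictly between the two extremes --- is not a local extremum, and your ``bitonic profile with one horizontal edge at the bottom and one at the top'' silently assumes plateaus occur only at the extremes. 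Interior plateaus can be excluded by further $K_{2,3}$/$K_4$ minor arguments of the same flavor, but that case has to be stated and proved; as written your case analysis does not cover it.
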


Felsner et al.~\cite{FLW-JGAA-03} prove this result by a construction based on breadth-first search, using the BFS number and depth in the BFS tree as coordinates. Alternatively, \cref{thm:op-weak-level} can be proven by using induction on the number of triangular faces of a maximal outerplanar graph to show that each such graph has a layout in which, on each edge of the outer face, there is room to add one more triangle with its new vertex one level below the upper level of the previous triangle vertices. Felsner et al.{} wrapped such a drawing (as in \cref{fig:Spiral}) to produce an improper 3-track layout (allowing edges between consecutive vertices in a track) of any outerplanar graph. Dujmovi\'c et~al.~\cite{DujPorWoo-DMTCS-04} proved that every outerplanar graph has a (proper) 5-track layout. 

\cref{WeakLayeredPathwidth,thm:op-weak-level} imply:

\begin{corollary}
Every outerplanar graph has layered pathwidth at most~$2$.
\end{corollary}

\subsection{Halin graphs}
\begin{figure}[t]
\centering\includegraphics[width=2.5in]{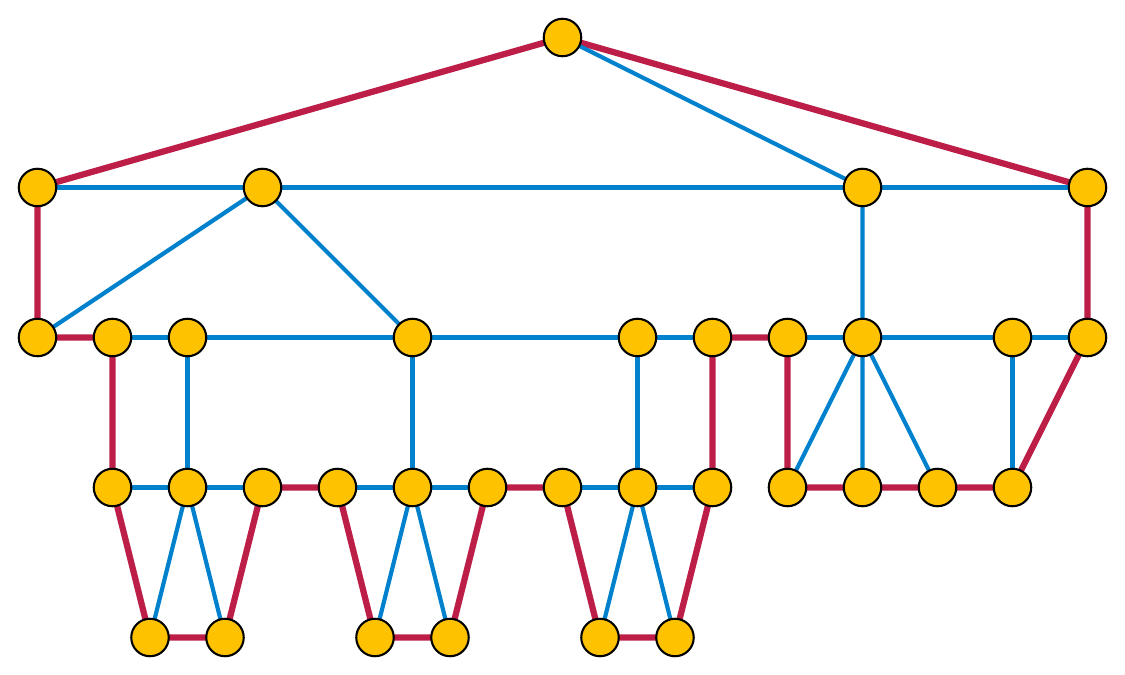}
\caption{A Halin graph drawn with all edges on a single level or consecutive levels}
\label{fig:LeveledHalin}
\end{figure}

Recall that a \emph{Halin graph}~\cite{Hal-CMA-71} is the graph formed from a tree with no degree-2 vertices, embedded in the plane, by connecting the leaves of the  tree by a cycle in the order given by the embedding. Di Giacomo and Meijer~\cite{DM-GD03} proved that every Halin graph has a 5-track layout, and described a Halin graph with track-number at least 4. As far as we are aware, it is open whether every Halin graph has a 4-track layout.

\begin{theorem}
\label{HalinWeak}
Every Halin graph has a weakly leveled planar drawing.
\end{theorem}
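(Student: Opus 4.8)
The plan is to induct on the structure of the tree $T$ underlying the Halin graph $H = T \cup C$, building the weakly leveled planar drawing by repeatedly expanding a single tree vertex into a \emph{fan}. First I would fix the base case and the inductive operation. Call an internal tree vertex \emph{penultimate} if all of its children are leaves; a finite tree with no degree-two vertex always has one (take a deepest internal vertex), and since its degree is at least three it has at least two leaf-children which, by the planar embedding, are consecutive along $C$. Contracting a penultimate vertex together with its leaf-children into a single new leaf $v'$ — inheriting the parent tree-edge and the two cycle-edges that leave the group — yields a smaller graph; I would check that this introduces no new degree-two vertex, so the contraction is again a genuine Halin graph, and that the base of the recursion (where $T$ is a star) is exactly a wheel.

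So the two claims to establish are: (i) every wheel has a weakly leveled planar drawing; and (ii) from a weakly leveled planar drawing of the contracted Halin graph, one can re-expand the leaf $v'$ into the fan $v,\ell_i,\dots,\ell_j$ while preserving weak leveledness. For (i), I would place the hub on a middle level $L$ with nothing else on that level near it, and draw $C$ as a weakly leveled closed contour using only the three levels $L-1,L,L+1$, so that the hub reaches the top and bottom leaves by a planar fan and reaches its at most two remaining (side) leaves as its immediate same-level neighbors.

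For (ii), which is the heart of the argument, I would work entirely locally, reusing the slot occupied by $v'$. In the given drawing $v'$ has degree three — a parent edge and two cycle-edges to neighbors $x$ and $y$ — all landing within one level of $v'$'s level $L$. I would keep $v$ where $v'$ was (retaining the parent edge), place the interior children on level $L+1$ just below $v$, inside the face bounded by the two cycle-edges on the non-parent side, and place the two end-children $\ell_i,\ell_j$ either on level $L$ immediately to the left and right of $v$, or on level $L+1$, chosen so that the reconnecting edges $\ell_i x$ and $\ell_j y$ — routed along the former corridors of $v'x$ and $v'y$ — each span at most one level. This keeps the edges emanating from $v$ a planar fan, keeps the children-path weakly leveled (a monotone staircase between levels $L$ and $L+1$), and places at most the two end-children on level $L$ beside $v$, so the same-level adjacency constraint is respected.

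The main obstacle I expect is precisely this local insertion. I must argue that there is genuinely room inside the relevant face to place the whole fan on levels $\{L,L+1\}$ without crossing edges passing near the old position of $v'$, and that the at most two same-level children can always be made consecutive to $v$. The cleanest way to discharge this is to insert the children into an arbitrarily small horizontal neighborhood of $v'$'s former location, stretching the drawing horizontally to create room and routing the two boundary cycle-edges along the deleted edges $v'x$ and $v'y$ so that reaching $x$ and $y$ costs nothing extra in levels. I would then verify the level arithmetic in the worst case, where $x$ and $y$ sit on opposite sides of level $L$, confirming that the end-children levels can still be chosen from $\{L,L+1\}$ consistently with $\lvert\operatorname{level}(\ell_i)-\operatorname{level}(x)\rvert\le 1$ and the analogous bound for $y$.
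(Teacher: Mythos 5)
Your strategy---contract a penultimate internal vertex together with its fan of leaf children into a single degree-three leaf, recurse down to a wheel, then re-expand---is genuinely different from the paper's proof, which uses no surgery at all: it roots the tree at a leaf, puts the root on level $0$, and at stage $i$ assigns to level $i$ the unassigned children of level-$(i-1)$ vertices together with every vertex on the path from such a child to its leftmost or rightmost leaf descendant, then checks directly that tree edges and cycle edges each span at most one level. The contraction skeleton and the wheel base case of your plan are fine; the problem is concentrated exactly where you predicted, in step (ii), and it is not resolved by the remedies you offer.

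The gap: you commit to placing the interior children on level $L+1$ ``just below $v$'' and to choosing the end-children's levels from $\{L,L+1\}$, but you have no control over where the non-parent face sits relative to level $L$ in the inductive drawing. Already in a natural weakly leveled drawing of a wheel (hub on level $1$, one rim vertex $c_1$ on level $0$, the remaining rim vertices on levels $1$ and $2$), the leaf $c_1$ has its tree-parent \emph{and} both cycle neighbours below it on level $1$, so the only face available for the inserted fan is the wedge lying entirely above $c_1$: it meets level $L$ only at the point $c_1$ itself and never reaches level $L+1$, so no child can be placed on level $L+1$ or beside $v$ on level $L$; the whole fan must go to a new level $L-1$. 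In other configurations the available face is a reflex sector and the children must be spread over all three levels $L-1,L,L+1$, with at most one child on level $L$ on each side of $v$ because a same-level edge forces its endpoints to be consecutive on that level. You also never verify these consecutiveness requirements for the new same-level edges ($x\ell_i$, $\ell_j y$, and consecutive children sharing a level), which is the other place the surgery can break. All of this can be repaired by a case analysis on the levels of $x$, $y$, and the parent and on which directions the face occupies around $v'$, but that case analysis is the substance of the proof and is missing; as literally stated, the construction fails on the wheel example above. Horizontal stretching and routing along the old corridors, which you propose as the fix, address only the horizontal room, not the vertical direction in which the fan must be placed.
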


\begin{proof}
Choose an arbitrary leaf of the tree from which the Halin graph was constructed, to be the root of the tree. Then assign vertices to levels as follows: the root is assigned to level $0$. Then, in stage~$i$ of the assignment ($i=1,2,\dots$) we assign to level~$i$ the previously-unassigned nodes that are either children of nodes at level~$i-1$, or that belong to a path from such a child to its leftmost or rightmost leaf descendant.

This level assignment (depicted in \cref{fig:LeveledHalin}) is consistent with the given planar embedding of the tree and the Halin graph formed from the tree. Clearly, it embeds each two vertices that are adjacent in the tree to the same level or consecutive levels, because if one of the two adjacent vertices is assigned to level~$i$ in stage~$i$ but the other is not, then the second vertex will be one of the children of the first vertex assigned to level~$i+1$ in the next stage. Additionally, pairs of vertices that are adjacent in the outer cycle of the Halin graph must also belong to the same level or adjacent levels, because (with the exception of the two edges incident to the root, for which a similar argument is possible) each such pair of vertices must consist of the rightmost descendant of  one child and the leftmost descendant of the next child of the lowest common ancestor of the two nodes. The two children of the common ancestor can only be one level apart, and the same follows for their extremal descendants. 
\end{proof}

\cref{WeakLayeredPathwidth,HalinWeak} imply:

\begin{corollary}
Every Halin graph has layered pathwidth at most~$2$.
\end{corollary}

\subsection{Series-parallel and tree-apex graphs}

\begin{figure}[t]
\centering\includegraphics[width=3in]{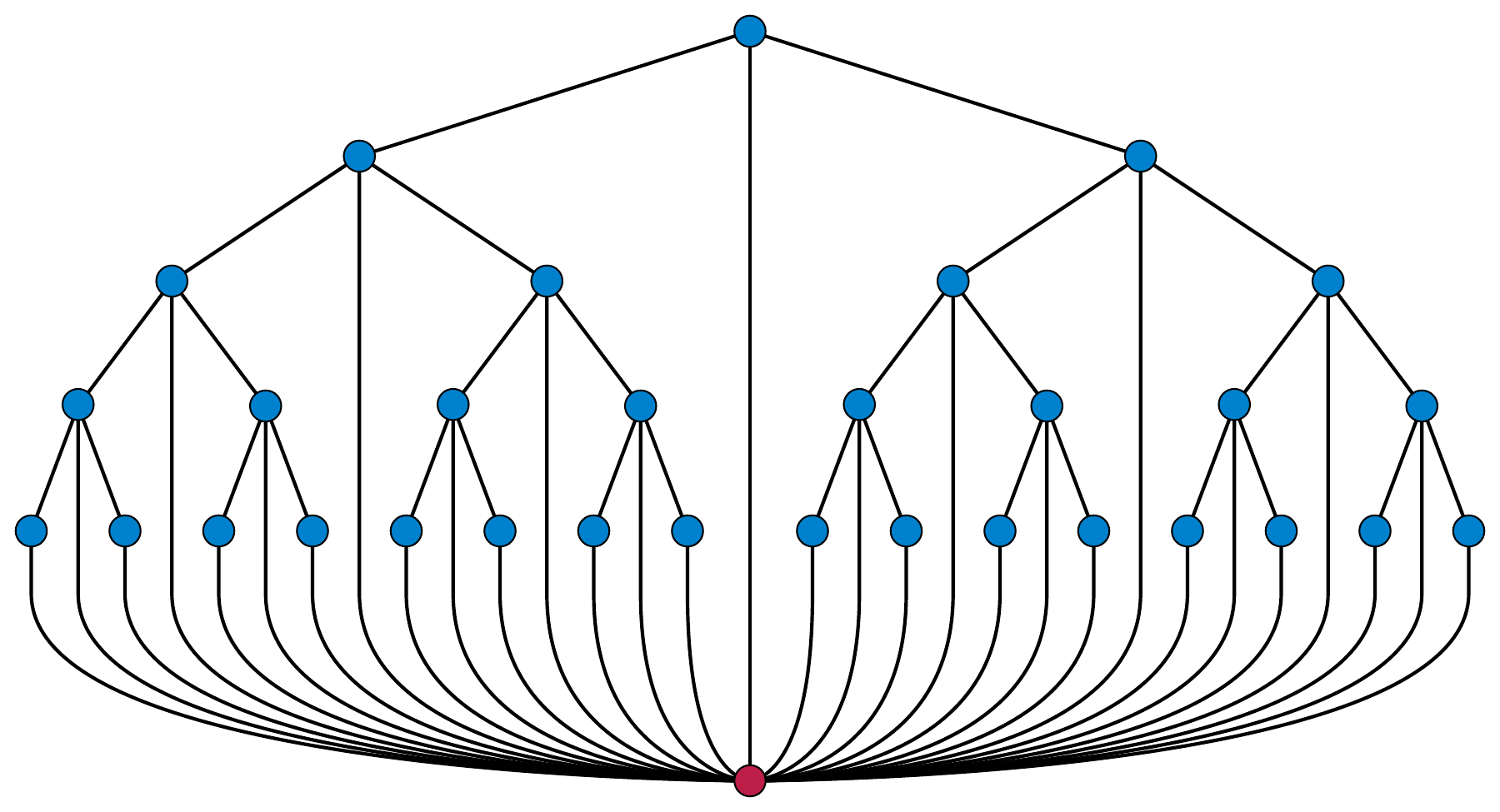}
\caption{Adding an apex to a complete binary tree produces a series-parallel graph with track-number 4 but arbitrarily large layered pathwidth.}
\label{fig:ApexTree}
\end{figure}

Although series-parallel graphs are in some sense intermediate in complexity between outerplanar graphs and Halin graphs (for instance, series-parallel graphs and outerplanar graphs have treewidth~2, whereas Halin graphs in general have treewidth~3), it is not true that every series-parallel graph has bounded layered pathwidth. In fact it is not true that every \emph{tree-apex} graph has bounded layered pathwidth, that is, a graph that can be obtained from a tree by adding a universal vertex that is adjacent to all other vertices (\cref{fig:ApexTree}). 

\begin{theorem}
\label{apexTree}
For every integer $h\geq 0$, there exists a series-parallel graph, in fact a tree-apex graph, that has a track-number at most $4$ and layered pathwidth  $\Theta(h)$.
\end{theorem}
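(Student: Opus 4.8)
The plan is to let $G_h$ be the complete binary tree $T_h$ of height $h$ together with a single apex vertex $a$ adjacent to every vertex of $T_h$. By definition this is a tree-apex graph, and it is series-parallel because it has treewidth $2$ and no $K_4$ minor: any $K_4$ minor would require three branch sets avoiding $a$ that are pairwise adjacent through tree edges, i.e.\ a $K_3$ minor inside the acyclic graph $T_h$, which is impossible. There are then three things to verify: that the track-number of $G_h$ is at most $4$, that its layered pathwidth is $O(h)$, and that its layered pathwidth is $\Omega(h)$.

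For the track-number, first I would observe that $T_h$ is leveled planar: drawing it with each vertex placed on the level equal to its depth, root on top and children below their parent, gives a planar drawing in which every edge joins two consecutive levels. By \cref{lem:spiral}, $T_h$ therefore admits a $3$-track layout. I would then place the apex $a$ alone on a fourth track. Every edge incident to $a$ joins $a$ to one of the three tracks holding $T_h$, and all such edges share the endpoint $a$, so between the apex track and any other track the edges form a star and cannot cross; the tree edges among the first three tracks are non-crossing by construction. Hence $G_h$ has a valid $4$-track layout.

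For the upper bound on layered pathwidth, note that $\mathrm{pathwidth}(G_h)=\Theta(h)$: the complete binary tree of height $h$ has pathwidth $\Theta(h)$, this is a lower bound by subgraph-monotonicity, and placing $a$ in every bag of an optimal path decomposition of $T_h$ raises the width by only $1$. Taking the trivial layering with all vertices in a single layer (all edges then lie within one layer, which the definition permits), the layered width equals the maximum bag size, so the layered pathwidth is at most $\mathrm{pathwidth}(G_h)+1=O(h)$.

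The heart of the argument, and the step I expect to be the main obstacle, is the lower bound $\Omega(h)$. The key observation is that a universal vertex collapses the layering into three consecutive layers: in any layered path decomposition, if $a$ lies in layer $L_\ell$, then every other vertex, being adjacent to $a$, must lie in $L_{\ell-1}\cup L_\ell\cup L_{\ell+1}$, so all of $V(G_h)$ is contained in these three layers. Consequently every bag $B$ satisfies $|B|=|B\cap L_{\ell-1}|+|B\cap L_\ell|+|B\cap L_{\ell+1}|\le 3w$, where $w$ is the layered width, so the underlying path decomposition has width at most $3w-1$ and thus $\mathrm{pathwidth}(G_h)\le 3w-1$. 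Combining this with $\mathrm{pathwidth}(G_h)\ge\mathrm{pathwidth}(T_h)=\Theta(h)$ yields $w=\Omega(h)$, completing the proof that the layered pathwidth is $\Theta(h)$.
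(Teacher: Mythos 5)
Your proposal is correct and follows essentially the same route as the paper: the same apex-over-complete-binary-tree construction, the same four-track layout (tree on three tracks via its leveled planar drawing, apex alone on a fourth), and the same lower bound via the observation that the universal vertex confines every layering to three layers so that some bag meets one layer in $\Omega(h)$ vertices. You simply spell out details the paper leaves implicit (the $K_4$-minor-freeness, the non-crossing of the apex star, and the trivial one-layer layering for the $O(h)$ upper bound), all of which check out.
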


\begin{proof}
Consider the tree-apex graph $G$ formed from a complete binary tree of height~$h$ by adding a universal vertex that is adjacent to all other vertices (\cref{fig:ApexTree}). $G$ is series-parallel, has track-number at most $4$ (since the tree has a  $3$-track layout), and has pathwidth $\Theta(h)$. $G$ does not have bounded layered pathwidth because the universal vertex forces every layering of this graph to use at most three layers (the one containing this vertex and at most two adjacent layers). Every path decomposition of $G$ has a bag with $\Omega(h)$ vertices, and the largest of the three intersections of this bag with a layer must also have $\Omega(h)$ vertices. Therefore, $G$ has layered pathwidth $\Omega(h)$.
\end{proof}

\subsection{Unit disc graphs}

For a set $P$ of points in the plane, the unit disc graph $G$ of $P$ has vertex set $P$, where $vw \in E(G)$ if and only if $\dist(v,w) \leq 1$.

\begin{theorem}
\label{UnitDiscGraphs}
Every unit disc graph with maximum  clique size $k$ has layered pathwidth at most $4k$.
\end{theorem}

\begin{proof}
Say vertex $v\in P$ has coordinates $(x(v),y(v))$.
Assume $y(v) \geq 0$ for all $v\in P$.
For each non-negative integer $j$, let $V_j := \{v \in V(G): j \leq y(v) < j+1\}$.
Then $V_0, V_1,\dots$ is a layering of $G$.
Say $V(G) = \{v_1,\dots,v_n\}$ where $x(v_1) \leq x(v_2) \leq \dots \leq x(v_n)$ .
For $i\in[1,n]$, let $B_i := \{ v \in V(G) : x(v_i) \leq x(v) \leq x(v_i)+1 \}$.
Observe that $B_1,\dots,B_n$ is a path decomposition of $G$.
Now consider the layered pathwidth.
Every vertex in $B_i \cap V_j$ lies in the unit square with bottom-left
corner $(x(v_i),j)$.
Partition this square into four subsquares of side length 1/2.
At least  $|B_i \cap V_j|/4$ vertices lie in one such subsquare, and
these vertices form a clique in $G$.
Thus $k \geq |B_i \cap V_j|/4$ and $|B_i \cap V_j| \leq 4k$.
\end{proof}

Of course, $k/2$ is a lower bound on the layered pathwidth in \cref{UnitDiscGraphs}.

\section{Parameterized complexity}
\label{ParameterizedComplexity}

A problem is \emph{uniformly fixed-parameter tractable} if there is an algorithm that solves it in polynomial time for any fixed value of the parameter. 
A problem is \emph{non-uniformly fixed-parameter tractable} if there is a collection of algorithms such that for each possible fixed value of the parameter one of the algorithms solves the problem in polynomial time. See~\cite{FPT-Book} for an introduction to fixed parameter tractability. 

\subsection{Treewidth}
\label{sec:finite-state-proof}

We first sketch an argument as to why it is not possible to use Courcelle's Theorem (or any automata methods based on tree decompositions) to produce a fixed-parameter tractable algorithm for leveled planarity with respect to treewidth. Consider the family of graphs depicted in~\cref{fig:two-paths}. These graphs have bounded treewidth (in fact pathwidth at most $12$) and are leveled planar precisely when $p$ equals $q$. However, since $p$ and $q$ are unbounded it is necessary to carry more than a finite amount of state between bags in a treewidth decomposition when parsing the decomposition. Thus, the decompositions corresponding to leveled planar graphs cannot be recognized by automata or methods using automata such as Courcelle's Theorem. We now make this intuitive argument formal using the Myhill-Nerode Theorem for tree automata.

\begin{figure}[t]
\centering
\includegraphics[width=0.65\textwidth]{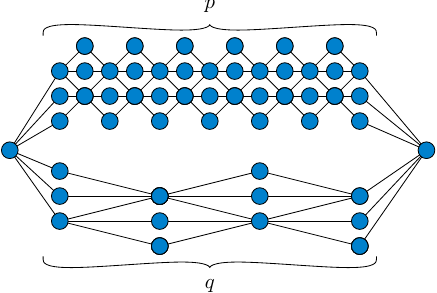}
\caption{A family of graphs with bounded treewidth demonstrating that the family of leveled planar graphs is not finite state.}
\label{fig:two-paths}
\end{figure}

In order to avoid set-theoretic difficulties we consider only finite graphs whose vertices are drawn from a fixed countable set; this involves no loss of generality. Following Downey and Fellows~\cite{FPT-Book}, we define a \emph{$t$-boundaried graph} to be a graph $G$ with $t$ designated \emph{boundary} vertices labeled $1,2,\ldots, t$. Given two $t$-boundaried graphs $G_1$ and $G_2$ we define their \emph{gluing} $G_1 \oplus G_2$ by identifying each boundary vertex of $G_1$ with the boundary vertex of $G_2$ having the same label.

An $n$-ary \emph{$t$-boundaried operator} $\otimes$ consists of a $t$-boundaried graph $G_\otimes = (V_\otimes, E_\otimes)$ and injections $f_i : \{1,\ldots, t\} \to V_\otimes$ for $1 \leq i \leq n$. Then for $t$-boundaried graphs $G_1,\ldots,G_n$ we define the $t$-boundaried graph $G_1 \otimes \cdots \otimes G_n$ by gluing each $G_i$ to $G_\otimes$ after applying $f_i$ to the boundary labels of $G_\otimes$. After the gluing the labels of $G_i$ are forgotten.

It can be shown that there exists a \emph{standard set} of $t$-boundaried operators on $t$-boundaried graphs that can be used to generate the set of all graphs of treewidth~$t$. Furthermore, it is possible to convert (in linear time) a tree decomposition of width $t$ into a parse tree that uses these standard operators; see Theorem~12.7.1 in \cite{FPT-Book}. Define $\mathcal{U}_t$ to be the set of $t$-boundaried graphs obtained by parse trees, using these standard operators. Given a family of graphs $F$, we define the equivalence relation $\sim_F$ on $\mathcal{U}_t$, such that $G_1 \sim_F G_2$ if and only if for all $H \in \mathcal{U}_t$, we have $G_1 \oplus H \in F \Leftrightarrow G_2 \oplus H \in F$.

A family of graphs $F$ is said to be \emph{$t$-finite state} if the family of parse trees for graphs in $F_t = F \cap \mathcal{U}_t$ is $t$-finite state. Equivalently, such a family of parse trees may be recognized by a finite tree automaton. We can now state the analog of the Myhill--Nerode Theorem (characterizing recognizability of sets of strings by finite state machines) for treewidth $t$ graphs in place of strings and finite tree automata in place of finite state machines.

\begin{theorem}[Theorem 12.7.2 of \cite{FPT-Book}]\label{thm:myhill-nerode}
A family of graphs $F$ is $t$-finite state if and only if $\sim_{F}$ has 
finitely many equivalence classes over $\mathcal{U}_t$.
\end{theorem}

\begin{figure}[t]
\centering
\includegraphics[width=0.95\textwidth]{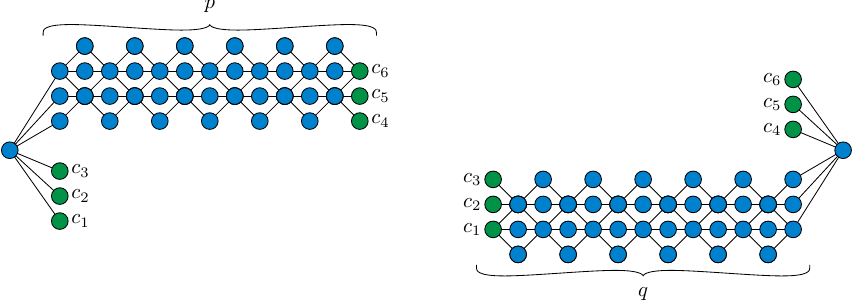}
\caption{The $6$-boundaried graphs $U_p$ (left) and $L_q$ (right) from the proof of~\cref{thm:finite-state}.}
\label{fig:u_l_graphs}
\end{figure}

As we now show, leveled planarity is not $t$-finite state when $t$ is sufficiently large.

\begin{theorem}\label{thm:finite-state}
For all $t \geq 6$,
the family of leveled planar graphs is not $t$-finite state.
\end{theorem}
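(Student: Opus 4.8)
The plan is to invoke the Myhill--Nerode characterization of \cref{thm:myhill-nerode}: to show that the leveled planar graphs are not $6$-finite state it suffices to exhibit an infinite family of graphs in $\mathcal{U}_6^{\text{small}}$ that are pairwise inequivalent under $\sim_F$, where $F$ is the family of leveled planar graphs. Concretely, I would use the $6$-boundaried graphs $U_p$ ($p\geq 1$) shown on the left of \cref{fig:u_l_graphs}, together with the $6$-boundaried ``test'' graphs $L_q$ ($q\geq 1$) shown on the right, each of treewidth at most $6$ so that they lie in the small universe, and each expressible via the standard operators by the conversion guaranteed in the statement preceding \cref{thm:myhill-nerode}. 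The graphs $U_p$ and $L_q$ share the six boundary vertices, and the gluing $U_p\oplus L_q$ is designed to reproduce the family of \cref{fig:two-paths}, in which two paths of lengths $p$ and $q$ must be routed between a common pair of endpoints.

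The heart of the argument is the structural claim that \emph{$U_p\oplus L_q$ is leveled planar if and only if $p=q$}. Granting this, inequivalence is immediate: for $p\neq p'$ take $H=L_p$; then $U_p\oplus L_p$ is leveled planar (since $p=p$) while $U_{p'}\oplus L_p$ is not (since $p'\neq p$), so $U_p\not\sim_F U_{p'}$. Hence $\sim_F$ has infinitely many equivalence classes over $\mathcal{U}_6^{\text{small}}$, its index is infinite, and by \cref{thm:myhill-nerode} the leveled planar graphs are not $6$-finite state. To pass from $t=6$ to arbitrary $t\geq 6$, I would augment each $U_p$ and $L_q$ with $t-6$ additional isolated boundary vertices; this leaves the gluings and their leveled planarity unchanged, keeps the treewidth at most $t$, and places the graphs in $\mathcal{U}_t^{\text{small}}$, so the same infinite antichain of $\sim_F$-classes witnesses non-finite-statedness for every $t\geq 6$.

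It remains to establish the structural claim, and this is where I expect the real work to lie. The ``if'' direction is constructive: when $p=q$ the two paths, having equal length, can both be routed monotonically between the two shared endpoints placed on fixed levels, yielding a leveled planar drawing. The ``only if'' direction is the main obstacle. Here I would exploit a $K_{2,3}$-type forcing, exactly as in the analysis of \cref{fig:BadCurves}: the boundary gadget is arranged so that each of the two routing paths must run \emph{monotonically} in level between a top endpoint and a bottom endpoint whose levels are pinned down (up to global reflection) by the rigidity of the surrounding $K_{2,3}$ subgraphs. Because every edge of a leveled planar drawing joins consecutive levels, a monotone path uses exactly one level per edge, so the number of levels spanned equals the path length; forcing both paths to span the same level interval then forces $p=q$.

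The delicate points inside this last step are (i) verifying that the boundary construction genuinely forbids a path from doubling back to a lower level and returning, i.e.\ that non-monotone routings are obstructed either by a crossing or by a face that cannot accommodate the detour, and (ii) confirming that the endpoint levels are determined independently of $p$ and $q$, so that the two path spans are literally the same interval rather than merely equal-length intervals at different heights. Both are planarity-and-level bookkeeping arguments of the same flavor as \cref{lem:next-bag} and the \cref{fig:BadCurves} discussion, and I would carry them out by a careful case analysis of the level assignments consistent with the fixed embedding of the forcing gadgets.
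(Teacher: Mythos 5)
Your proposal follows the paper's proof exactly: the same $6$-boundaried gadgets $U_p$ and $L_q$ from \cref{fig:u_l_graphs}, the same key observation that $U_p\oplus L_q$ is leveled planar if and only if $p=q$, and the same appeal to \cref{thm:myhill-nerode} to conclude infinite index of $\sim_F$. The paper simply states the structural claim as an observation and reduces to $t=6$ without comment, whereas you sketch the supporting details; the approach is the same.
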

\begin{proof}
Let $F$ be the family of leveled planar graphs. It suffices to prove the theorem in the case when $t = 6$. Consider the $6$-boundaried graphs $U_p$ and $L_q$ shown in~\cref{fig:u_l_graphs}, and observe that $U_p \oplus L_q$ is leveled planar if and only if $p = q$. So $U_p \sim_F U_{\ell}$ if and only if $p = \ell$, which implies that $\sim_{F_6}$ does not have finitely many equivalence classes, and that in turn $F$ is not $6$-finite state by~\cref{thm:myhill-nerode}.
\end{proof}

\cref{thm:finite-state} implies that (when $t\ge 6$) the parse trees of leveled planar graphs with treewidth $t$ are not recognizable by tree automata. Therefore automata-based methods such as Courcelle's Theorem cannot be used to show leveled planarity to be fixed-parameter tractable with respect to treewidth. In particular, leveled planarity cannot be expressed using the forms of monadic second-order graph logic to which Courcelle's Theorem applies.

\subsection{Tree-depth}

The \emph{tree-depth} of a graph $G$ is the minimum height of a forest of rooted trees on the same vertex set as $G$ such that edges in $G$ only go from ancestors to descendants in the forest. It is bounded by pathwidth, and therefore by track-number: $\text{track-number}(G)\leq\text{pathwidth}(G)+1\leq\text{tree-depth}(G)$; see \cite{DujMorWoo-SJC-05,Sparsity-Book}.

\begin{theorem}
Computing the track-number or layered pathwidth of a graph $G$ is non-uniformly fixed-parameter linear in the tree-depth of $G$.
\end{theorem}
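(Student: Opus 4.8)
The plan is to exploit two properties that hold for tree-depth but fail for treewidth: that bounded tree-depth forbids long paths, and that the resulting graph classes are well-quasi-ordered. First I would record that both quantities are bounded by the tree-depth $d$. For the track-number this is exactly the chain $\text{track-number}(G)\le\text{pathwidth}(G)+1\le\mathrm{td}(G)$ stated above, and for the layered pathwidth it follows by taking the trivial layering with a single layer $V(G)$ together with any optimal path decomposition, so that $\text{layered pathwidth}(G)\le\text{pathwidth}(G)+1\le\mathrm{td}(G)$. Consequently, for each fixed $d$ it suffices to decide, for every $k\in\{1,\dots,d\}$, whether the relevant parameter is at most $k$, and to return the least such $k$. (We may assume the value $d=\mathrm{td}(G)$ is known, since deciding $\mathrm{td}(G)\le d$ can be done in linear time for fixed $d$.)

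The key structural observation is that the predicates ``$\text{track-number}\le k$'' and ``$\text{layered pathwidth}\le k$'' are both closed under taking subgraphs: restricting a $k$-track layout, or a width-$k$ layered path decomposition, to any subgraph preserves its defining properties. Since tree-depth is also subgraph-monotone, the class
\[
\mathcal{G}_{d,k}=\{G:\mathrm{td}(G)\le d,\ \text{track-number}(G)\le k\}
\]
is closed under subgraphs in the universe of all graphs, and the same holds for the layered-pathwidth analogue. Hence each such class is characterized by a set of minimal forbidden subgraphs, and $G\in\mathcal{G}_{d,k}$ if and only if $G$ contains none of them as a subgraph.

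Here tree-depth does the work that treewidth could not. A graph of tree-depth at most $d$ contains no path on $2^{d}$ vertices, and Ding's theorem guarantees that the class of graphs with no long path---equivalently, of bounded tree-depth---is well-quasi-ordered by the subgraph relation. The minimal forbidden subgraphs of $\mathcal{G}_{d,k}$ that we ever need to test against are themselves subgraphs of bounded-tree-depth graphs, hence have tree-depth at most $d$ and lie in this well-quasi-ordered class; being an antichain under the subgraph relation, there are only finitely many of them. This is precisely the point where the obstruction of \cref{fig:two-paths} evaporates: the two long paths whose lengths must be compared cannot occur once the tree-depth is bounded, so no unbounded state is required and a finite obstruction set is forced to exist.

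The algorithm then hard-codes, for the given value $d$, the finite obstruction sets $\mathcal{O}_{d,1},\dots,\mathcal{O}_{d,d}$ for the track-number (and analogously for layered pathwidth), tests for each $k$ whether $G$ contains some member of $\mathcal{O}_{d,k}$ as a subgraph, and outputs the least $k$ for which it does not. Because tree-depth at most $d$ bounds treewidth by $d$, and each obstruction $H$ is a fixed graph, ``$G$ contains $H$ as a subgraph'' is monadic-second-order expressible and is decidable in linear time on the bounded-treewidth input by Courcelle's theorem; summing over the finitely many obstructions and the at most $d$ values of $k$ keeps the total time linear. Note that it is only the global predicate ``$G$ is leveled planar'' that is not finite state, and we have replaced it by a finite disjunction of finite-state subgraph conditions, so the use of Courcelle's theorem here is unproblematic. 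The main obstacle, and the only non-constructive ingredient, is that Ding's theorem supplies the sets $\mathcal{O}_{d,k}$ only as abstract finite sets: we cannot compute them from $d$ and $k$. This is exactly what makes the result \emph{non-uniformly} fixed-parameter linear rather than uniform, as asserted.
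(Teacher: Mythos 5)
Your proposal is correct and follows essentially the same route as the paper: both arguments rest on the monotonicity of track-number and layered pathwidth under (induced) subgraphs, the bound by tree-depth, the well-quasi-ordering of bounded-tree-depth classes yielding a finite but non-constructive obstruction set for each parameter value, and a linear-time containment test. The only cosmetic differences are that the paper works with induced subgraphs and cites the well-quasi-ordering and linear-time induced-subgraph-testing results for bounded tree-depth directly from N\v{e}set\v{r}il and Ossona de Mendez, whereas you derive the well-quasi-ordering from Ding's theorem via excluded long paths and test subgraph containment via Courcelle's theorem; both are valid.
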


\begin{proof}
Track-number and layered pathwidth are both monotone (cannot increase) under taking induced subgraphs. The graphs with tree-depth bounded by a constant are well-quasi-ordered under taking induced subgraphs and so for any fixed bound on tree-depth and either track-number or layered pathwidth there exist only finitely many forbidden induced subgraphs~\cite{Sparsity-Book}. Since the track-number and pathwidth are both bounded by the tree-depth, the same is true for any fixed bound on tree-depth, regardless of track-number or layered pathwidth.

 Because induced subgraph testing is linear time for graphs with tree-depth bounded by a fixed number $d$, we can for each $t \leq d$ test if the graph has any of the forbidden induced subgraphs to track-number $t$ each in linear time~\cite{Sparsity-Book}.
\end{proof}

However, this argument does not tell us how to find the set of forbidden subgraphs, nor what the dependence of the time bound on the tree-depth is. It would be of interest to replace this existence proof with a more constructive algorithm.

\subsection{Almost-trees}

The \emph{cyclomatic number} (also called \emph{circuit rank}) of a graph is defined to be $r = m - n + c$ where $c$ is the number of connected components in an $n$-vertex $m$-edge graph. We say that a graph $G$ is a \emph{$k$-almost tree} if every biconnected component of $G$ has cyclomatic number at most $k$. The problems of $1$-page and $2$-page crossing minimization and testing $1$-planarity were shown to be fixed-parameter tractable with respect to the $k$-almost tree parameter, via the kernelization method~\cite{BanCabEpp-WADS-2013,BanEppSim-GD-13}.

In these previous papers,
the ``standard kernelization'' used for a $k$-almost tree $G$ is constructed by first iteratively removing degree one vertices until no more remain, leaving what is called the \emph{2-core} of $G$. The $2$-core consists of vertices of degree greater than two and paths of degree two vertices connecting these high degree vertices. The paths of degree two vertices are then shortened to a maximum length whose value is a function of~$k$, with a precise form that depends on the specific problem.

However, this kernelization cannot be used to produce a fixed-parameter tractable algorithm for deciding leveled planarity. To see this, consider the graph in \cref{fig:almost-tree-example}, constructed by drawing $K_{2,3}$ in the plane, and replacing each of the three vertices with paths of $k$ vertices, and then rooting a complete binary tree of depth $\ell$ at one of the vertices of each of these paths. We note that, as complete binary trees have unbounded pathwidth, they also require an unbounded number of layers (depending on $\ell$) in any leveled planar drawing. Additionally, depending on the planar embedding chosen for this graph, at most two of its three trees can be drawn on the outside face.  So this graph is leveled planar precisely when $\ell$ is  small enough for the remaining tree $T_\ell$ to be drawn within one of the four bounded faces of the drawing; that is, the leveled planarity of the graph depends on the relationship between $k$ and $\ell$. Since this relationship is not preserved in the kernelization it cannot be used to produce a fixed-parameter tractable algorithm for leveled planarity.

\begin{figure}[t]
\centering
\includegraphics[height=70mm]{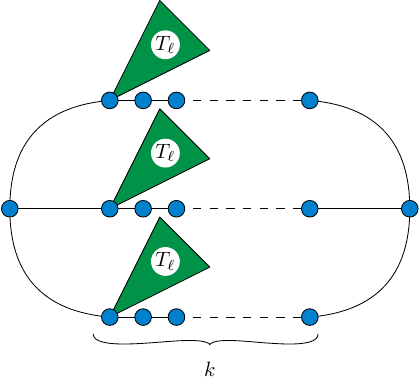}
\caption{A family of $2$-almost trees for which the standard kernelization cannot decide leveled planarity. The subgraphs $T_\ell$ are complete binary trees of depth $\ell$.}
\label{fig:almost-tree-example}
\end{figure}

\newcommand{\MR}[1]{\MRhref{#1}{MR:#1}} 

\providecommand{\bysame}{\leavevmode\hbox to3em{\hrulefill}\thinspace}
\providecommand{\MR}{\relax\ifhmode\unskip\space\fi MR }
\providecommand{\MRhref}[2]{%
  \href{http://www.ams.org/mathscinet-getitem?mr=#1}{#2}%
}
\providecommand{\href}[2]{#2}

\end{document}